\newtheorem{theorem}{Theorem}
\newtheorem{definition}{Definition}
\newtheorem{lemma}{Lemma}
\newtheorem{example}{Example}
\newtheorem*{property}{Chain Property}
\newtheorem{corollary}[theorem]{Corollary}
\newenvironment{proof}{\noindent{\bf Proof.}}%
{\hspace*{\fill}$\Box$\par\vspace{4mm}}
\def\cadre{$$\vcenter\bgroup\advance\hsize by -2em\noindent
	\refstepcounter{equation}(\theequation)~\ignorespaces}
\def\endcadre{\egroup\eqno$$\global\@ignoretrue}
\newcommand{\mybreak} {\par\vspace{2mm}\noindent}
\def\imod#1{\allowbreak\mkern10mu({\operator@font mod}\,\,#1)}
\newcommand{\comment}[1]{}
\newcommand{\pr} {{\rm Pr}}
\newcommand{\pra}[1] {\pr\left\{#1\right\}}
\newcommand{\A} {\mathbb{A}}
\newcommand{\E} {\mathbb{E}}
\newcommand{\N} {\mathbb{N}}
\newcommand{\G} {\mathbb{G}}
\newcommand{\R} {\mathbb{R}}
\newcommand{\Gc} {\mathcal{G}}
\newcommand{\F} {\mathcal{F}}
\newcommand{\Ea}[1] {\E\left(#1\right)}
\setlist[itemize]{labelindent=\parindent, leftmargin=0.4cm}
\title{Functions that are uniquely maximized by sparse quasi-star graphs, and uniquely minimized by quasi-complete graphs} 
\author{AA}
\date{}
\author{Nicola Apollonio\footnote{Istituto per le Applicazioni del
		Calcolo, M. Picone, Via dei Taurini 19, 00185 Roma, Italy.
		\texttt{nicola.apollonio@cnr.it}}
}
\begin{document}
	\pagestyle{plain}
	\renewcommand\thmcontinues[1]{Continued}
	\maketitle	
\begin{abstract}
We show that for a certain class of convex functions $f$, including the exponential functions $x\mapsto e^{\lambda x}$ with $\lambda>0$ a real number, and all the powers $x\mapsto x^\beta$, $x\geq 0$ and $\beta\geq 2$ a real number, with a unique small exception, if $(d_1,\ldots,d_n)$ ranges over the degree sequences of graphs with $n$ vertices and $m$ edges and $m\leq n-1$, then the maximum of $\sum_i f(d_i)$ is uniquely attained by the degree sequence of a quasi-star graph, namely, a graph consisting of a star plus possibly additional isolated vertices. This result significantly extends a similar result in [D.~Ismailescu, D.~Stefanica, Minimizer graphs for a class of extremal problems, J.~Graph Theory,~39~(4)~(2002)]. Dually, we show that for a certain class of concave functions $g$, including the negative exponential functions $x\mapsto 1-e^{-\lambda x}$ with $\lambda>\ln(2)$ a real number, all the powers $x\mapsto x^\alpha$, $x\geq 0$ and $0<\alpha\leq \frac{1}{2}$ a real number, and the function $x\mapsto \frac{x}{x+1}$ for $x\geq 0$, if $(d_1,\ldots,d_n)$ ranges over the degree sequences of graphs with $n$ vertices and $m$ edges, then the minimum of $\sum_i g(d_i)$ is uniquely attained by the degree sequence of a quasi-complete graph, i.e., a graph consisting of a complete graph plus possibly an additional vertex connected to some but not all vertices of the complete graph, plus possibly isolated vertices. This result extends a similar result in the same paper.
	\end{abstract}
	\mybreak
	{\small\textbf{Keywords}: degree- and conjugate degree-sequence, extremal graphs, threshold graphs, Chebyshev's Algebraic Inequality.}
\section{Introduction}
Let $\G(n,m)$ be the set of graphs with $n$ vertices and $m$ edges. Also, let $\Sigma$ be a real-valued function whose domain contains $\N^n$. A graph $G\in \G(n,m)$ is a \emph{maximizer} of $\Sigma$ over $\G(n,m)$ if the degree sequence $d$ of $G$ achieves the maximum possible value of $\Sigma$ among all the degree sequences of members of $\G(n,m)$. If $G$ is a maximizer, then so is any graph isomorphic to $G$. In general, however, non-isomorphic graphs might have the same degree sequence. We say that $G$ \emph{uniquely maximizes} $\Sigma$ if, up to isomorphism, $G$  is the unique graph whose degree sequence maximizes $\Sigma$. In this paper we are interested in functions $\Sigma$ of the form $(x_1,\ldots,x_n)\mapsto \sum_i^nf(x_i)$ where $f$ is a real-valued function whose domain contains the natural numbers and $f$ satisfies the following ``integer strict convexity'' condition
\begin{equation}\label{eq:Nstrict}
f(k+2)-f(k+1)> f(k+1)-f(k),\quad \forall k\in \N. 
\end{equation}
Every strictly convex function $f$ on a convex set $D$ satisfies \eqref{eq:Nstrict} provided that $D\supseteq \N$. In particular, so does $\exp_\lambda: \R\rightarrow \R$, $x\mapsto e^{\lambda x}$ for $\lambda$ a positive real number and $p_\beta:\R_+\rightarrow \R$, $x\mapsto x^\beta$, for $\beta$ a real number such that $\beta>1$. For the functions $p_\beta$ with $\beta \in \R$, the notation $\Sigma_{p_\beta}$ is usually abridged to $\Sigma_\beta$. In particular, if $\beta\in \N$, then $\Sigma_\beta$ is the power-sum of degree $\beta$. The most celebrated and studied instance of $\Sigma_\beta$ is undoubtedly $\Sigma_2$, namely the sum of the squares of the degrees of a graph, which has been investigated in literally dozens of papers by several authors from different viewpoints and it is still a central topic in the mathematical chemistry literature where $\Sigma_2$ appears under the name of Zagreb index~\cite{dasetalsurvey}. We refer to \cite{alhswedeetal,nikiforov} for a brief history on the exact maximization of $\Sigma_2$. Borrowing the terminology of \cite{abregoetal, alhswedeetal, chineu}, the result below shows that $\Sigma_2$ is maximized exactly. 
\begin{theorem}[\cite{abregoetal,alhswedeetal,ifive}]\label{thm:sigma_2th}
If $m$ and $n$ are given integers such that $n>0$ and $0\leq m\leq {n \choose 2}$, then 
the \emph{quasi-complete} graph $QK(m,n)$ or the \emph{quasi-star} graph $QS(n,m)$ maximizes $\Sigma_2$ over $\G(n,m)$ where  
\begin{itemize}
	\setlength\itemsep{0.02cm}
	\setlength\parskip{0pt}
\item[--] $QK(n,m)$ is the graph on $\{1,\ldots,n\}$ such that, for integers $q$ and $r$ uniquely determined by the relations $m={q \choose 2}+r$ and $0\leq r\leq q-1$, the vertices $1,\ldots,q$ induce a complete graph, vertex $q+1$ is adjacent to vertices $1,\ldots, r$, and vertices $q+2,\ldots,n$ are isolated;          
\item[--] $QS(n,m)$ is the complement of $QK(n,{n \choose 2}-m)$.   
\end{itemize}
\end{theorem}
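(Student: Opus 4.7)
The plan is to combine a local edge-swap (compression) argument with the self-dual behavior of $\Sigma_2$ under complementation. Suppose $G\in \G(n,m)$ maximizes $\Sigma_2$, and let $a,b,c$ be distinct vertices with $ab\in E(G)$ and $ac\notin E(G)$. Replacing $ab$ by $ac$ yields $G'\in \G(n,m)$ with
\[
\Sigma_2(G')-\Sigma_2(G)=(d_c+1)^2+(d_b-1)^2-d_c^2-d_b^2=2(d_c-d_b)+2.
\]
Hence if $d_c\ge d_b$ the swap strictly increases $\Sigma_2$, contradicting maximality. So any maximizer satisfies: whenever $d_c\ge d_b$ and $ab\in E$, then $ac\in E$ (or $c=a$). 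This is exactly the nested-neighborhood property, so a maximizer must be a \emph{threshold} graph.

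Next I would invoke the complement identity: writing $M=\binom{n}{2}$, a direct expansion of $(n-1-d_v)^2$ and use of $\sum d_v=2m$ gives
\[
\Sigma_2(\bar G)=\Sigma_2(G)+n(n-1)^2-4m(n-1),
\]
so maximizing $\Sigma_2$ over $\G(n,m)$ is equivalent (via $G\mapsto \bar G$) to maximizing over $\G(n,M-m)$. Since complements of threshold graphs are threshold and $\overline{QK(n,M-m)}=QS(n,m)$ by definition, it suffices to prove the claim for $m\le M/2$ with candidate $QK(n,m)$.

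Restricted to threshold graphs with $m\le M/2$, I would then run an exchange argument on threshold creation sequences (equivalently, on the self-conjugate Ferrers diagram of the degree sequence). Using the identity $\sum_i d_i^2=\sum_j (2j-1)d_j^{\ast}$, where $d^{\ast}$ is the conjugate degree sequence, I would show that any threshold degree sequence distinct from that of $QK(n,m)$ admits a single-cell Ferrers move that keeps the diagram self-conjugate, preserves the total $2m$, and pushes a unit from a smaller row to a larger row, thereby strictly increasing the weighted sum $\sum_j (2j-1)d_j^{\ast}$. Iterating leads to the unique self-conjugate shape corresponding to $QK(n,m)$.

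The main obstacle is the last step: describing a move on threshold (self-conjugate) degree sequences that is simultaneously (i) feasible, (ii) $m$-preserving, (iii) threshold-preserving, and (iv) strictly $\Sigma_2$-increasing, and then certifying that every threshold graph other than $QK(n,m)$ admits such a move. The swap of the first paragraph is $m$-preserving and $\Sigma_2$-increasing but need not preserve the threshold shape, so the clean argument is better carried out directly on Ferrers diagrams, where the reduction to $m\le M/2$ plays a decisive role in guaranteeing that the target shape $QK(n,m)$ is the correct extremum.
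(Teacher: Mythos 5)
There is a genuine gap, and it sits exactly where you flag the ``main obstacle.'' Your first two steps are fine and match the machinery the paper actually uses elsewhere: the edge-swap computation is the $f=p_2$ case of Theorem~\ref{thm:opt} (maximizers are threshold graphs), and the complementation identity $\Sigma_2(\overline{G})=\Sigma_2(G)+n(n-1)^2-4m(n-1)$ correctly reduces the problem to $m\leq \binom{n}{2}/2$ \emph{for the disjunctive statement}. But your third step aims at the wrong target: you propose to show that every threshold graph other than $QK(n,m)$ admits a strictly $\Sigma_2$-increasing, $m$-preserving, threshold-preserving move, which would prove that $QK(n,m)$ alone is the maximizer whenever $m\leq\binom{n}{2}/2$. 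That is false. Take $n=5$, $m=4\leq 5=\binom{5}{2}/2$: the quasi-star $QS(5,4)\cong K_{1,4}$ has $\Sigma_2=20$, while $QK(5,4)$ has degree sequence $(3,2,2,1,0)$ and $\Sigma_2=18$. So no monotone compression toward $QK(n,m)$ can exist; indeed the present paper's own Theorem~\ref{thm:main} shows that for $m\leq n-1$ the quasi-star is (almost always uniquely) optimal, and such $m$ lie well below $\binom{n}{2}/2$ once $n$ is moderately large. The disjunction ``$QK$ \emph{or} $QS$'' in the statement is essential and cannot be collapsed by halving the range of $m$.

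The missing idea in the literature you are reproving (Ahlswede--Katona, Abrego et al.) is not a monotone improvement toward a single shape but an \emph{endpoint} argument: after the reduction to threshold graphs, one parametrizes a natural family of threshold degree sequences interpolating between the quasi-star and the quasi-complete shape and shows that $\Sigma_2$ is convex along this family, so its maximum is attained at one of the two extremes; which extreme wins depends on $(n,m)$ in a genuinely non-monotone way. Note also that the paper itself offers no proof of Theorem~\ref{thm:sigma_2th} --- it is quoted from the cited references --- so only your first paragraph overlaps with an argument actually carried out in the paper (the proof of Theorem~\ref{thm:opt}); the remainder would need to be replaced by the endpoint/convexity argument rather than the Ferrers-diagram compression you sketch.
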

In \cite{sterbo}, Peled, Petreschi and Sterbini refined Theorem~\ref{thm:sigma_2th} by showing that there are six isomorphism classes of graphs that maximize $\Sigma_2$. Note that if $m\leq n-1$, then $QS(n,m)$ is isomorphic to a star on $m$ leaves plus $n-m-1$ isolated vertices. In the special case of very sparse graphs, namely, when $m\leq n-2$, Ismailescu and Stefanica \cite{IsmaStef}, significantly refined Theorem~\ref{thm:sigma_2th} by showing that, with a single small exception, $QS(n,m)$ uniquely maximizes not only $\Sigma_2$ but also $\Sigma_\nu$ for every integer $\nu\geq 2$. In this paper we give a significant extension of the latter result by showing that, with a single small exception, for all $m\leq n-1$, the quasi-star graph $QS(n,m)$ uniquely maximizes $\Sigma_f$ for every function $f$ in a class $\F$ that contains, for instance, all the powers $p_\beta$ for $\beta\geq 2$ a real number, all the exponentials $\exp_\lambda$ for $\lambda>0$ a real number, and any linear combination of such functions with non-negative coefficients at least one of which is positive. Remark that not every function that satisfies \eqref{eq:Nstrict} is maximized over $\G(n,m)$ by $QS(n,m)$ when $m\leq n-1$. For instance, in Theorem~\ref{ex:1} we show that for $q\geq 4$ an integer, there exists a real number $\epsilon(q)$, such that $0<\epsilon(q)<1$ and $\Sigma_{1+\epsilon(q)}$ is not maximized by $QS(n,m)$ for $m={q \choose 2}$ and $n=m+1$. 

\noindent In the same paper \cite{IsmaStef}, Ismailescu and Stefanica, extending earlier work of Linial and  Rozenman~\cite{LinialRoze}, also gave a dual result, namely, that $\Sigma_\alpha$  is uniquely minimized by the quasi-complete graph $QK(n,m)$ over $\G(n,m)$ for all real numbers $\alpha\in (0,\frac{1}{2}]$. We extend the latter result by showing that the quasi-complete graph $QK(n,m)$ uniquely maximizes $\Sigma_g$ for any function $g$ in a class of functions containing, for instance, besides all the powers $p_\alpha$ for a real number $\alpha\in (0,\frac{1}{2}]$ also the negative exponentials $1-\exp_{-\lambda}$ for $\lambda>\ln{}2$ a real number, and any linear combination of such functions with non-negative coefficients at least one of which is positive. This result is particularly interesting when compared to Theorem~\ref{thm:sigma_2th} and its refinement in \cite{sterbo} because $\Sigma_2$ has multiple maximizers while, for instance,  $-\Sigma_{\frac{1}{2}}$ has a unique maximizer. Moreover, as shown in \cite{LinialRoze} for the function $-\Sigma_{\frac{1}{2}}$, and in \cite{sterbo} for the function $\Sigma_2$, both functions are maximized by the \emph{threshold graphs} in $\G(n,m)$, namely those graphs that satisfy the following property.
\begin{property} A graph $G$ has the \emph{chain property} if for any three distinct vertices $i$, $j$ and $k$ it holds that if $d_i\geq d_j$ and $j$ and $k$ are adjacent, then $i$ and $k$ are also adjacent.  
\end{property}
Threshold graphs are characterized in several other equivalent ways \cite{chvHam,golumbic,mahapele}. For our purposes, however, the Chain Property is the only property we need to show that $\Sigma_f$ is maximized by the threshold graphs in $\G(n,m)$ for every $f$ that satisfies \eqref{eq:Nstrict}. The proof of this fact is remarkably simple and it is just an abstraction of the corresponding proofs given in \cite{LinialRoze} and \cite{sterbo} in the special cases. 
\mybreak

\subsection{Preliminaries} Let us give some notation that will be used throughout the rest of the paper. For a positive integer $n$, $[n]$ is the set of the first $n$ positive integers. Since the property of being a graph maximizer of $\Sigma$ is invariant under graph isomorphism, the vertex set $V(G)$ of any graph $G$ considered in this paper is $[n]$. Accordingly, the degree sequence of $G$ is the sequence $d=(d_1,\ldots,d_n)$ where $d_i$ is the degree of vertex $i$, $i\in [n]$. To say that two vertices $i$ and $j$ of $G$ are adjacent we write $i\sim j$. The \emph{conjugate sequence} of the degree sequence $d=(d_1,\ldots,d_n)$ is the sequence $d^*=(d^*_1,\ldots,d^*_n)$ where $d_i^*$ is defined by the rule $d_i^*=\#\{j\in[n] \ | \ d_j\geq i\}$. For two graphs $G$ and $H$, the graph $G\oplus H$ is isomorphic to the graph theoretic union of two vertex-disjoint copies of $G$ and $H$, respectively. The graph $\overline{G}$ is the complement of $G$.
\section{Threshold graphs maximize strictly convex functions}
As mentioned in the previous section, \cite{sterbo} proves that the Chain Property is a necessary condition for maximizing $\Sigma_2$ over $\G(n,m)$ and \cite{LinialRoze} proves that the Chain Property is a necessary condition for minimizing $\Sigma_\alpha$ for $\alpha\in (0,\frac{1}{2}]$. Here we generalize their results to all the functions $\Sigma_f$ where $f$  satisfies \eqref{eq:Nstrict}. In particular, the result holds for strictly convex (or strictly concave) functions $f$ whose domain contains the natural numbers. Recall that a strictly convex function is a real-valued function $f$ defined on an interval $I$ of $\R$, such that 
$$f\left(\lambda x+(1-\lambda)y\right)<\lambda f(x)+(1-\lambda)f(y)$$
for every $x,\,y\in I$, $x\not=y$, and for every $\lambda\in (0,1)$. 
A useful geometrical characterization of strict convexity is the following. 
\begin{lemma}\label{lem:slopecon}
A real-valued function $f$ defined on an interval $I$ of $\R$ is strictly convex if and only if for any three $x_1,\,x_2,\,x_3\in I$ such that $x_1<x_2<x_3$ the following three-slopes condition holds 
$$\frac{f(x_2)-f(x_1)}{x_2-x_1}< \frac{f(x_3)-f(x_1)}{x_3-x_1}< \frac{f(x_3)-f(x_2)}{x_3-x_2}.$$
\end{lemma}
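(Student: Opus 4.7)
The plan is to prove both implications by reducing the inequalities to the defining strict convexity inequality applied to a single convex combination, since the point $x_2$ lies between $x_1$ and $x_3$ and can be written as $x_2 = \lambda x_1 + (1-\lambda)x_3$ for a specific $\lambda \in (0,1)$.

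For the forward direction, I would start from $x_1 < x_2 < x_3$ and set $\lambda = \frac{x_3 - x_2}{x_3 - x_1}$, so that $1-\lambda = \frac{x_2 - x_1}{x_3 - x_1}$ and indeed $x_2 = \lambda x_1 + (1-\lambda) x_3$. Strict convexity then gives
$$f(x_2) < \frac{x_3 - x_2}{x_3 - x_1} f(x_1) + \frac{x_2 - x_1}{x_3 - x_1} f(x_3).$$
Rearranging this inequality in two different ways (subtracting $f(x_1)$ from both sides and dividing by $x_2 - x_1$, respectively subtracting $f(x_3)$ from both sides and dividing by $x_2 - x_3$) yields precisely the two inequalities
$$\frac{f(x_2)-f(x_1)}{x_2-x_1} < \frac{f(x_3)-f(x_1)}{x_3-x_1} \quad \text{and} \quad \frac{f(x_3)-f(x_1)}{x_3-x_1} < \frac{f(x_3)-f(x_2)}{x_3-x_2}.$$
This is essentially a bookkeeping calculation so the main task is just to avoid sign errors when dividing by the negative quantity $x_2 - x_3$.

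For the reverse direction, fix $x, y \in I$ with $x < y$ and $\lambda \in (0,1)$, and set $z = \lambda x + (1-\lambda) y$, so $x < z < y$. Apply the three-slopes hypothesis with $x_1 = x$, $x_2 = z$, $x_3 = y$: either of the two resulting inequalities, after multiplying through by the relevant positive length and using $z - x = (1-\lambda)(y-x)$ and $y - z = \lambda(y-x)$, rearranges to $f(z) < \lambda f(x) + (1-\lambda) f(y)$, which is exactly the strict convexity condition.

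I do not expect a serious obstacle here: the entire argument is just the standard reformulation of the convex-combination inequality in terms of slopes of secant lines. The only mildly delicate point is keeping track of signs when converting between the three equivalent forms and verifying that each of the three chords (from $x_1$ to $x_2$, from $x_1$ to $x_3$, and from $x_2$ to $x_3$) is correctly identified with the corresponding term obtained from the convex-combination inequality.
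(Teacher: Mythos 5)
Your proof is correct. The paper states Lemma~\ref{lem:slopecon} without proof, treating it as a standard characterization of strict convexity, and your argument is exactly the standard one: writing $x_2=\lambda x_1+(1-\lambda)x_3$ with $\lambda=\frac{x_3-x_2}{x_3-x_1}$ and rearranging the convex-combination inequality in the two indicated ways does yield both slope inequalities (the sign flip when dividing by $x_2-x_3<0$ is handled correctly), and the converse direction recovers $f(z)<\lambda f(x)+(1-\lambda)f(y)$ from either slope inequality as you describe.
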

By allowing weak inequalities in all the above relations, one recovers the usual notion of convexity. It is clear that if a strictly convex function $f$ is defined for the natural numbers, then $f$ satisfies \eqref{eq:Nstrict}, since such a condition is a special case of the three-slopes condition given in the lemma. The converse is also true in the following sense.
\begin{lemma}[Slopes condition]\label{lem:slope}
If $f: \N\rightarrow \R$ satisfies \eqref{eq:Nstrict}, then $f$ satisfies the three-slopes condition in Lemma \ref{lem:slopecon} for any three $x_1,\,x_2,\,x_3\in \N$ such that $x_1<x_2<x_3$.
\end{lemma}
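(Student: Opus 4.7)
The plan is to exploit condition \eqref{eq:Nstrict} by reading it as the statement that the sequence of first differences $\Delta_k := f(k+1)-f(k)$, $k\in\N$, is \emph{strictly increasing}. Each of the three slopes appearing in Lemma \ref{lem:slopecon} is, for natural arguments, a plain average of consecutive $\Delta_k$'s, so the three-slopes inequality reduces to a simple comparison of averages of disjoint, ordered blocks of an increasing sequence.

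In detail, I would set $a=x_1,\ b=x_2,\ c=x_3$ and use the telescoping identity
$$f(y)-f(x)=\sum_{k=x}^{y-1}\Delta_k \qquad (x<y \text{ in }\N)$$
to rewrite the three slopes as
$$S_1=\frac{1}{b-a}\sum_{k=a}^{b-1}\Delta_k,\qquad S_2=\frac{1}{c-a}\sum_{k=a}^{c-1}\Delta_k,\qquad S_3=\frac{1}{c-b}\sum_{k=b}^{c-1}\Delta_k.$$
Because $\Delta_k$ is strictly increasing by \eqref{eq:Nstrict}, every term of the sum defining $S_1$ is bounded above by $\Delta_{b-1}$, which is strictly less than $\Delta_b$, which in turn bounds from below every term of the sum defining $S_3$. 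Averaging yields $S_1<S_3$.

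Finally, splitting the range of summation of $S_2$ at $b$ gives
$$(c-a)\,S_2=(b-a)\,S_1+(c-b)\,S_3,$$
so $S_2$ is a strict convex combination of $S_1$ and $S_3$ with weights $(b-a)/(c-a)$ and $(c-b)/(c-a)$, both in $(0,1)$. Combined with $S_1<S_3$ from the previous step, this forces $S_1<S_2<S_3$, which is exactly the three-slopes condition. I do not expect a genuine obstacle here: the only subtlety is making sure the empty-sum edge cases do not occur, which is guaranteed by the strict inequalities $x_1<x_2<x_3$ in $\N$ (so $b-a\geq 1$ and $c-b\geq 1$, and the sums defining $S_1$ and $S_3$ are nonempty).
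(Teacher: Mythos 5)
Your proposal is correct and follows essentially the same route as the paper: both rewrite the three slopes as averages of consecutive blocks of the strictly increasing first-difference sequence $\Delta_k=f(k+1)-f(k)$ and compare those averages. Your convex-combination step $(c-a)S_2=(b-a)S_1+(c-b)S_3$ is simply a more explicit justification of the paper's one-line claim that the average of all $b+c$ differences lies strictly between the average of the smallest $b$ and the average of the largest $c$.
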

\begin{proof}
Let $x_1=a$, $x_2=a+b$ and $x_3=a+b+c$, where $a,\,b,\,c$ are positive integers. Let $\nabla_i=f(i)-f(i-1)$, $i=a+1,\ldots a+b+c$ and observe that $f(a+b+c)-f(a)=\sum_{i=a+1}^{a+b+c}\nabla_i$. The sequence $\left(\nabla_i\right)$ is strictly increasing in $i$ for $i\in \{a+1,\ldots,a+b+c\}$. This sequence has $b+c$ elements the first $b$ of which are smaller than the last $c$ ones. Therefore
$$\frac{\sum_{i=a+1}^{a+b}\nabla_i}{b}<\frac{\sum_{i=a+1}^{a+b+c}\nabla_i}{b+c}<\frac{\sum_{i=a+b+1}^{a+b+c}\nabla_i}{c}$$
because the average of $b+c$ elements is always less than the average of the largest $c$ elements and always greater than the average of the smallest $b$ elements.      
\end{proof}
Not every function satisfying \eqref{eq:Nstrict} is the restriction to $\N$ of a strictly convex function whose domain contains $\N$. For instance,
let $\ell_{k+1}:\R\rightarrow \R$ be the function whose graph in $\R^2$ is the straight line through the points $(k,f(k))$ and $(k+1,f(k+1))$. Then the function $\ell:\R\rightarrow \R$ defined by $\ell=\max_{k\in \N}\ell_{k+1}$ is a piecewise linear convex function. It satisfies \eqref{eq:Nstrict} but is not strictly convex.
\mybreak
We show the necessity of the Chain Property when maximizing $\Sigma_f$ for any $f$ that satisfies \eqref{eq:Nstrict}.
\begin{theorem}\label{thm:opt}
	Let $f$ be any function satisfying \eqref{eq:Nstrict}. If $G$ maximizes $\Sigma_f$ over $\G(n,m)$, then $G$ is a threshold graph.
\end{theorem}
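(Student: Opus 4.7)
The plan is a direct edge-rotation (contrapositive) argument: assuming $G\in\G(n,m)$ maximizes $\Sigma_f$ but violates the Chain Property, I will exhibit another graph $G'\in\G(n,m)$ with $\Sigma_f(G')>\Sigma_f(G)$, contradicting the maximality of $G$. By negation of the Chain Property, there exist three distinct vertices $i,j,k$ with $d_i\geq d_j$, $j\sim k$ in $G$, and $i\not\sim k$ in $G$. Let $G'$ be obtained from $G$ by deleting the edge $jk$ and inserting the edge $ik$; since $i,j,k$ are distinct and $i\not\sim k$ in $G$, the graph $G'$ is simple and still belongs to $\G(n,m)$. In $G'$ the degree of $i$ becomes $d_i+1$, the degree of $j$ becomes $d_j-1$, the degree of $k$ is unchanged, and every other degree is unchanged as well.

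Write $\nabla_\ell := f(\ell+1)-f(\ell)$ for $\ell\in\N$; condition \eqref{eq:Nstrict} says precisely that the sequence $(\nabla_\ell)_{\ell\geq 0}$ is strictly increasing. The resulting change in $\Sigma_f$ telescopes to
\[
\Sigma_f(G')-\Sigma_f(G)=\bigl[f(d_i+1)-f(d_i)\bigr]-\bigl[f(d_j)-f(d_j-1)\bigr]=\nabla_{d_i}-\nabla_{d_j-1}.
\]
From $d_i\geq d_j$ I get $\nabla_{d_i}\geq \nabla_{d_j}$ by monotonicity, while $\nabla_{d_j}>\nabla_{d_j-1}$ holds directly by \eqref{eq:Nstrict}. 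Chaining these yields $\Sigma_f(G')>\Sigma_f(G)$, the desired contradiction.

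The only step requiring care is the strictness of the inequality: when $d_i>d_j$ the monotonicity of $(\nabla_\ell)$ alone already gives $\nabla_{d_i}>\nabla_{d_j-1}$, but in the tight case $d_i=d_j$ the strict gap comes \emph{solely} from the strict inequality in \eqref{eq:Nstrict}. This is exactly why the hypothesis is stated in strict form, and it is also what would fail for a merely convex $f$ that is affine between consecutive integers. Beyond that single subtlety, there is no real obstacle; as the excerpt remarks, the argument is an abstraction of the special-case proofs in \cite{sterbo} and \cite{LinialRoze}, using no property of $f$ beyond the strict convexity of its integer differences.
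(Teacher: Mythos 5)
Your proposal is correct and is essentially identical to the paper's own proof: the same edge rotation $G'=([n],E(G)\Delta\{ik,jk\})$, the same telescoping degree change, and the same use of the strict inequality in \eqref{eq:Nstrict} to conclude $\Sigma_f(G')>\Sigma_f(G)$. Your extra remark isolating where strictness is needed in the tight case $d_i=d_j$ is a fine (if implicit in the paper's ``Since $d_i+1>d_j$'') clarification, but the argument is the same.
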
	
\begin{proof}
Let $G$ be an arbitrary graph in $\G(n,m)$ and let $d$ be its degree sequence. If $G$ is not a threshold graph, then $G$ violates the Chain Property. Hence there exist $i,\,j,\,k\in [n]$ such that $d_i\geq d_j$, and $i\not\sim k$ while $j\sim k$. The graph $G'=([n], E(G)\Delta \{ik,jk\})$ has the same size as $G$ and degree sequence $d'=(d_1,\ldots,d_i+1,\ldots,d_j-1,\ldots,d_n)$. Since $d_i+1>d_j$,  \eqref{eq:Nstrict} implies that $$f(d')-f(d)=\left(f(d_i+1)-f(d_i)\right)-\left(f(d_j)-f(d_j-1)\right)>0.$$  We conclude that $G$ cannot be optimal. 
\end{proof}
The concave counterpart follows by reversing the inequality in the theorem.
\begin{theorem}\label{thm:opt_1}
Let $g$ be any function such that $-g$ satisfies \eqref{eq:Nstrict}. If $G$ minimizes $\Sigma_g$ over $\G(n,m)$, then $G$ is a threshold graph.
\end{theorem}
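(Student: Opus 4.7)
The plan is to observe that minimizing $\Sigma_g$ is the same as maximizing $\Sigma_{-g}$, and that by hypothesis $f:=-g$ satisfies \eqref{eq:Nstrict}. So the cleanest route is simply to apply Theorem~\ref{thm:opt} to $f=-g$: if $G$ minimizes $\Sigma_g$ over $\G(n,m)$, then $G$ maximizes $\Sigma_f$ over $\G(n,m)$, and hence $G$ is a threshold graph.

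Alternatively, I would give a direct switching argument that mirrors the proof of Theorem~\ref{thm:opt} line by line with the inequality reversed. Assume toward a contradiction that a minimizer $G$ violates the Chain Property, so there exist distinct $i,j,k\in[n]$ with $d_i\geq d_j$, $i\not\sim k$, and $j\sim k$. Let $G':=([n], E(G)\triangle\{ik,jk\})$, which again lies in $\G(n,m)$ and has degree sequence obtained from $d$ by replacing $d_i$ with $d_i+1$ and $d_j$ with $d_j-1$. Since $d_i+1>d_j$, applying \eqref{eq:Nstrict} to $-g$ at the integer $d_j-1$ through $d_i$ yields
$$\bigl(-g(d_i+1)+g(d_i)\bigr)-\bigl(-g(d_j)+g(d_j-1)\bigr)>0,$$
equivalently
$$\Sigma_g(G')-\Sigma_g(G)=\bigl(g(d_i+1)-g(d_i)\bigr)-\bigl(g(d_j)-g(d_j-1)\bigr)<0,$$
which contradicts the minimality of $G$.

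I do not expect any real obstacle here: both alternatives are routine. The ``reduction to Theorem~\ref{thm:opt}'' version is the shorter and more elegant one, but it requires the reader to trust that multiplying by $-1$ really does send \eqref{eq:Nstrict} to itself with the reversed inequality, which is immediate from the linearity of the expression in $f$. The only mild subtlety worth flagging is that \eqref{eq:Nstrict} is stated for all $k\in\N$ whereas in the switching step one only needs it at the single value $k=d_j-1$; this requires that $d_j\geq 1$, which is guaranteed by $j\sim k$, so the application is legitimate.
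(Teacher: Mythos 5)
Your proposal is correct and matches the paper's approach: the paper disposes of this statement with the single remark that it ``follows by reversing the inequality'' in Theorem~\ref{thm:opt}, which is precisely your reduction to $f=-g$ (and your explicit switching argument is just that reduction written out). Your observation that $d_j\geq 1$ is needed for the step at $k=d_j-1$, and is guaranteed by $j\sim k$, is a fair point of care that the paper leaves implicit.
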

Actually, the result also holds for the class of \emph{strictly Schur-convex} functions  $\Sigma:\R^n\rightarrow \R$ (see \cite{MarOlk,stepniak} for definitions). As observed in \cite{abregoetal}, the reason is that the degree sequences of threshold graphs are maximal with respect to the majorization order among all graphical sequences. The function $\Sigma_f$ is Shur-convex whenever $f$ is convex, and it is strictly Shur-convex when $f$ is strictly convex. This shows that $\Sigma$ may not be maximized by threshold graphs alone if $\Sigma$ is Shur-convex but not strictly Shur-convex. In light of the last two theorems, maximizers graphs of $\Sigma_f$ for $f$ a strictly convex function, and minimizers of $\Sigma_g$ for $g$ a strictly concave function must be sought among threshold graphs.   
\begin{corollary}\label{cor:unique_max}
Let $f$ satisfy \eqref{eq:Nstrict}.~If $\Sigma_f$ has a unique maximizer $d$ over the degree sequences of graphs in $\G(n,m)$, then $\Sigma_f$ is uniquely maximized by a threshold graph.
\end{corollary}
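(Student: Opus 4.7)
The plan is to combine Theorem \ref{thm:opt} with the classical fact that a threshold graph is the unique graph (up to isomorphism) realizing its degree sequence. This uniqueness is one of the standard equivalent characterizations of threshold graphs (see \cite{chvHam, golumbic, mahapele}), and it is precisely the ingredient that bridges ``unique maximizing degree sequence'' and ``unique maximizing graph (up to isomorphism)''.

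First I would pick any graph $G\in\G(n,m)$ whose degree sequence is the hypothesized unique maximizer $d$. Since $d$ maximizes $\Sigma_f$, the graph $G$ is a maximizer of $\Sigma_f$ over $\G(n,m)$, and hence by Theorem \ref{thm:opt} it must be a threshold graph. Next I would appeal to the standard property that two threshold graphs with the same degree sequence are isomorphic. Consequently, every graph in $\G(n,m)$ with degree sequence $d$ is isomorphic to $G$, so $G$ is, up to isomorphism, the unique graph in $\G(n,m)$ whose degree sequence is $d$.

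Finally, because by hypothesis $d$ is the only degree sequence of a graph in $\G(n,m)$ at which $\Sigma_f$ attains its maximum, no graph outside the isomorphism class of $G$ can maximize $\Sigma_f$. Combining these two facts gives that $\Sigma_f$ is uniquely maximized by the threshold graph $G$, in the sense of the definition introduced in the Introduction.

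There is essentially no obstacle here: the proof is a one-line composition of Theorem \ref{thm:opt} with the degree-sequence-uniqueness of threshold graphs. The only delicate point is making sure to cite (or recall) the latter property rather than re-deriving it, since the paper has chosen to list the Chain Property as the only threshold characterization to be used explicitly.
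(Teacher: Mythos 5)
Your proof is correct and follows essentially the same route as the paper: Theorem~\ref{thm:opt} combined with the degree-sequence uniqueness of threshold graphs. The only (immaterial) difference is that the paper invokes the stronger fact from \cite{bruno} that a threshold degree sequence has a unique realization among \emph{all} graphs, whereas you use the weaker statement for pairs of threshold graphs together with the (correct) observation that every realization of $d$ is itself a maximizer and hence threshold by Theorem~\ref{thm:opt}.
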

\begin{proof}
The result follows directly from Theorem \ref{thm:opt} by the fact (proved in \cite{bruno}) that if $d$ is the degree sequence of a threshold graph, then there exists a unique (up to isomorphism) graph with degree sequence $d$.  
\end{proof}
In the next lemma we collect the only three other properties of threshold graphs needed for our purposes, the second of which was given in \cite{IsmaStef}.
\begin{lemma}\label{lem:twoproperties}
Let $G$ be a threshold graph with $n$ vertices, degree sequence $d$, maximum degree $\Delta$ and minimum degree $\delta$. Then
\begin{enumerate}[label={\rm (\roman*)}, ref=\roman*]
	\item\label{com:i} each vertex of degree $\Delta$ is adjacent to every non-isolated vertex of $G$; 
	\item\label{com:ii} if $\delta>0$, then $\Delta=n-1$ and the neighbors of every vertex of degree $\delta$ have degree $n-1$;
	\item\label{com:iii} if $d_2^*>1$, then $G$ contains a triangle and, consequently, $d_2^*\geq 3$.  
\end{enumerate}
\end{lemma}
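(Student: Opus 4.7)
The plan is to derive all three parts by repeated, carefully chosen applications of the Chain Property to triples $(i,j,k)$ of vertices: to force an edge $v\sim u$ it suffices to find a neighbor $j$ of $u$ with $d_j\le d_v$ and invoke the property (once distinctness of $v,j,u$ is checked). This single manoeuvre, iterated between one and three times, handles every case.

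For (i), I would fix a vertex $v$ of degree $\Delta$ and a non-isolated vertex $u\ne v$; choosing any neighbor $j$ of $u$, the bound $d_v=\Delta\ge d_j$ makes the Chain Property applicable to $(v,j,u)$ (the case $j=v$ being trivial), yielding $v\sim u$. Part (ii) is then almost immediate: the hypothesis $\delta>0$ forces every vertex to be non-isolated, so by (i) every vertex of degree $\Delta$ is adjacent to all the others, hence $\Delta=n-1$. For a vertex $w$ with $d_w=\delta$ and a neighbor $u$ of $w$, every $x\in[n]\setminus\{u,w\}$ satisfies $d_x\ge\delta=d_w$, so the Chain Property applied to $(x,w,u)$ gives $x\sim u$; together with $w\sim u$ this shows $u$ is adjacent to every vertex of $[n]\setminus\{u\}$, i.e., $d_u=n-1$.

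Part (iii) is the subtle one. I would pick two vertices $u,v$ with $d_u\ge d_v\ge 2$ and split on whether $u\sim v$. If $u\sim v$, then $v$ admits a second neighbor $w\ne u$, and the Chain Property on $(u,v,w)$ gives $u\sim w$, closing the triangle $uvw$. If $u\not\sim v$, I would let $w_1,w_2$ be two distinct neighbors of $v$; the Chain Property on $(u,v,w_i)$ is admissible because $u\not\sim v$ forces $u\ne w_i$, and it yields $u\sim w_1,\,u\sim w_2$. The main obstacle is that nothing a priori relates $d_{w_i}$ to $d_v$, so the Chain Property cannot be used directly to produce an edge between $w_1$ and $w_2$. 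The trick is to turn the non-edge $uv$ into a degree inequality: if one had $d_{w_1}<d_v$, then the Chain Property applied to $(v,w_1,u)$ would force $v\sim u$, contradicting the case assumption. Hence $d_{w_1}\ge d_v$, and a final application of the Chain Property to $(w_1,v,w_2)$ produces $w_1\sim w_2$, so $uw_1w_2$ is a triangle.

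In either sub-case one obtains a triangle whose three vertices automatically have degree at least $2$ in $G$, whence $d_2^*\ge 3$, completing the proof.
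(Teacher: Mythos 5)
Your proposal is correct and follows essentially the same route as the paper: every part is obtained by direct, iterated applications of the Chain Property. The only divergence is in (iii), where the paper takes $u$ to be a vertex of maximum degree so that $u\sim v$ is automatic from (i) and the sub-case $u\not\sim v$ never arises; your more general choice of $u,v$ forces that extra sub-case, which you handle correctly.
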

\begin{proof}
The first statement is a direct consequence of the Chain Property while the second statement (still a direct consequence of the Chain Property) is proved in \cite{IsmaStef}. Let us prove the third statement. If $d_2^*>1$, then $G$ has at least two vertices $u$ and $v$ whose degree is at least 2. We can assume that $u$ has degree $d_1$ and $v$ has degree $d_2$. By the Chain Property, $u$ and $v$ have a common neighbor $w$. Hence $u$, $v$ and $w$ induce a triangle. Since these three vertices have at least degree two, it follows that $d_2^*=\#\{j \ |\ d_j\geq 2\}\geq 3$.  
\end{proof}

\section{Functions uniquely maximized by sparse quasi-star graphs}
In this section we give a sufficient condition for a function satisfying \eqref{eq:Nstrict} to be uniquely maximized by $QS(n,m)$ over $\G(n,m)$ when $m$ is such that $1\leq m\leq n-1$, namely, in the very sparse case. Such a condition yields a class $\F$ of functions which is significantly larger than the class of elementary powers $\{p_\nu: \R_+\rightarrow\R,\,x\mapsto x^\nu \ |\ \nu\in \N,\, \nu\geq 2\}$ devised in \cite{IsmaStef}. Without loss of generality, all of the functions $f$ dealt with throughout the rest of the paper are assumed to be \emph{centered} i.e., $f(0)=0$, because  $\Sigma_{f}$ and $\Sigma_h$ have the same maximizers whenever $f$ and $h$ differ by an additive constant.    
\begin{definition}
The class $\F$ consists of all centered functions $f$ that satisfy both \eqref{eq:Nstrict} and  
\begin{equation}\label{eq:forconvex}
(k-2)f(k)\geq k\left(f(k-1)-f(1)\right),\quad k\geq 3,\,k\in\N.
\end{equation} 
\end{definition} 
Note that every real-valued function $f$ whose domain contains $\N$ satisfies inequality $\eqref{eq:forconvex}$ as an equality for $k=0,1,2$. So we can remove the condition $k\geq 3$ in the definition. Remark that inequality \eqref{eq:forconvex} and the strict convexity of $f$ are independent conditions: it is readily checked that the function $p_{1/2}$ defined by $x\mapsto \sqrt{x}$ satisfies \eqref{eq:forconvex} but violates \eqref{eq:Nstrict} because $p_{1/2}$ is strictly concave, while the function $p_{1+\epsilon}$, where $\epsilon$ is a sufficiently small positive real number, is strictly convex but violates \eqref{eq:forconvex} (see Theorem~\ref{ex:1}).    
\mybreak
To prove the main result of this section, we need the following inequality due to Chebyshev. It can be found in the monograph~\cite{mitri}.
\begin{lemma}[Chebyshev's Algebraic Inequality]\label{lem:cheb}
Let $(a_1,a_2,\ldots,a_n)$ and $(b_1,b_2\ldots,b_n)$ be two sequences of real numbers. If
$$a_1\leq a_2\leq\cdots\leq a_n\quad\text{\rm and}\quad b_1\geq b_2\geq\cdots\geq b_n,$$ 
then
$$\sum_ia_ib_i\leq \frac{1}{n}\left(\sum_ia_i\right)\left(\sum_ib_i\right).$$
Equality in the inequality above holds if and only if at least one of the two sequences is constant.   
\end{lemma}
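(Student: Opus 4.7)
The plan is to use the standard symmetrization trick, considering the double sum
$$S=\sum_{i=1}^n\sum_{j=1}^n (a_i-a_j)(b_i-b_j).$$
First I would show $S\le 0$ term-by-term. For any pair $i<j$ the monotonicity hypotheses give $a_i-a_j\le 0$ and $b_i-b_j\ge 0$, so the product $(a_i-a_j)(b_i-b_j)$ is non-positive; the case $i>j$ is symmetric, and diagonal terms vanish. Hence $S\le 0$.

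Next I would expand $S$ algebraically. Distributing and using $\sum_{i}1=n$ yields
$$S=2n\sum_{i=1}^n a_ib_i-2\Bigl(\sum_{i=1}^n a_i\Bigr)\Bigl(\sum_{j=1}^n b_j\Bigr),$$
so $S\le 0$ is exactly the claimed inequality after dividing by $2n$. That disposes of the inequality itself with no extra work.

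For the equality case I would argue as follows. Since $S\le 0$ is obtained as a sum of non-positive terms, equality forces $(a_i-a_j)(b_i-b_j)=0$ for every pair $(i,j)$. If neither sequence is constant, pick indices $p$ and $q$ with $a_p<a_{p+1}$ and $b_q>b_{q+1}$. I would split into three cases based on the relative order of $p$ and $q$. When $p=q$ the pair $(p,p+1)$ itself produces a strictly negative contribution, contradicting $S=0$. When $p<q$, the pair $(p,q+1)$ has $p<q+1$ so $a_p\le a_{q+1}$; the chain $a_p<a_{p+1}\le\cdots\le a_{q+1}$ shows $a_p\ne a_{q+1}$, while $b_p\ge b_q>b_{q+1}$ shows $b_p\ne b_{q+1}$, contradicting the vanishing of the product on this pair. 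The case $p>q$ is handled symmetrically by considering the pair $(q,p+1)$. The conclusion is that at least one of the two sequences must be constant, and conversely if one sequence is constant the inequality trivially becomes an equality.

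The only slightly delicate step is this equality analysis — specifically, arranging the case distinction so that in each case a single pair of indices is exhibited whose corresponding term is strictly negative, which contradicts $S=0$. Everything else is a mechanical expansion.
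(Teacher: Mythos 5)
Your proof is correct and complete: the symmetrization identity $S=2n\sum_i a_ib_i-2\bigl(\sum_i a_i\bigr)\bigl(\sum_i b_i\bigr)$ is expanded correctly, the term-by-term sign argument gives $S\le 0$, and your case analysis for equality (exhibiting, in each of the three cases $p=q$, $p<q$, $p>q$, a single pair of indices whose term is strictly negative) is airtight, as is the trivial converse when one sequence is constant. There is nothing in the paper to compare against: the lemma is stated there without proof and attributed to the monograph of Mitrinovi\'{c}, so your argument simply supplies the standard proof that the paper delegates to the reference. The only part of the lemma that actually gets used nontrivially later (the characterization of equality) is precisely the part you treat most carefully, so the proposal would serve as a self-contained substitute for the citation.
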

 
\begin{theorem}\label{thm:main}
	If $f\in \F$, then $QS(n,m)$ maximizes $\Sigma_f$ over $\G(n,m)$ for every two integers $n$ and $m$ such that $0\leq m\leq n-1$. More precisely,
	\begin{enumerate}[label={\rm (\roman*)}, ref=\roman*]
		\item\label{com:i_inth} if $m\not=3$ or $f(3)>3(f(2)-f(1))$, then $QS(n,m)$ uniquely maximizes $\Sigma_f$ over $\G(n,m)$;
		\item\label{com:ii_inth} if $f(3)=3(f(2)-f(1))$, then $K_3\oplus \overline{K}_{n-3}$ also maximizes $\Sigma_f$ over $\G(n,3)$ for any $n\geq 4$.
	\end{enumerate}
\end{theorem}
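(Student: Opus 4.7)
The plan is to prove that every threshold graph $G\in\G(n,m)$ with $m\leq n-1$ satisfies $\Sigma_f(G)\leq \Sigma_f(QS(n,m))$, and to identify precisely when equality can hold; since Theorem~\ref{thm:opt} guarantees that any maximizer of $\Sigma_f$ is threshold, this settles the statement. I would begin with a structural dichotomy for threshold graphs $G\in\G(n,m)$ with $m\leq n-1$. By Lemma~\ref{lem:twoproperties}(\ref{com:iii}), either $d_2^*(G)\leq 1$ or $d_2^*(G)\geq 3$. In the former case, at most one vertex has degree~$\geq 2$, and combining this with Lemma~\ref{lem:twoproperties}(\ref{com:i}) and the Chain Property one checks that $G\cong QS(n,m)$. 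In the latter case $G$ contains a triangle, the maximum-degree vertex is adjacent to every non-isolated vertex, the number $n^*$ of non-isolated vertices equals $d_1+1$, and the presence of the triangle forces $d_1\leq m-1$.

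Next I would compare the two sums through the \emph{conjugate} degree sequence. Setting $\Delta_k:=f(k+1)-f(k)$, one has the identity $\sum_i f(d_i)=\sum_{j\geq 1}\Delta_{j-1}\,d_j^*$; using the explicit conjugate of the star together with $\sum_j d_j^*(G)=2m$ and $d_1^*(G)=d_1+1$ yields
\[
\Sigma_f(QS(n,m))-\Sigma_f(G)=(m-d_1)f(1)+[f(m)-f(1)]-\sum_{j=2}^{d_1}\Delta_{j-1}\,d_j^*(G).
\]
The sequence $(\Delta_{j-1})_{j=2}^{d_1}$ is strictly increasing by \eqref{eq:Nstrict} and $(d_j^*(G))_{j=2}^{d_1}$ is non-increasing, so Chebyshev's Algebraic Inequality (Lemma~\ref{lem:cheb}), applied with the totals $\sum_{j=2}^{d_1}\Delta_{j-1}=f(d_1)-f(1)$ and $\sum_{j=2}^{d_1}d_j^*(G)=2m-d_1-1$, bounds the rightmost sum from above. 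Substituting the resulting bound reduces the target inequality, up to the positive factor $d_1-1$, to the analytic claim
\[
F(m):=(m-k)(k+1)f(1)+(k-1)f(m)-(2m-k-1)f(k)\;\geq\;0,\qquad k:=d_1,\ m\geq k+1.
\]

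To verify $F(m)\geq 0$ I would observe $F(k)=0$, then study the discrete derivative $F(m+1)-F(m)=(k+1)f(1)+(k-1)[f(m+1)-f(m)]-2f(k)$, which is non-decreasing in $m$ by \eqref{eq:Nstrict}; hence non-negativity of all increments reduces to the base case $m=k$, which after rearranging reads $(k-1)f(k+1)\geq(k+1)(f(k)-f(1))$, i.e., precisely \eqref{eq:forconvex} instantiated at $k+1$. This yields $F(m)\geq 0$, and in fact $F(m)>0$ strictly as soon as $m\geq k+2$, because subsequent increments are then strictly larger than $F(k+1)-F(k)$ by the strict inequality in \eqref{eq:Nstrict}.

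The final step is the equality analysis, which I anticipate to be the most delicate part. Chebyshev is tight only when one of its two sequences is constant: either $d_1=2$ (a single summand), or $(d_j^*(G))_{j=2}^{d_1}$ is constant. For $d_1\geq 3$, the latter forces every non-isolated vertex of $G$ to have degree exactly $d_1$ (any pendant would be adjacent to every dominating vertex, hence its degree would be at least $d_2^*\geq 3$), so $G=K_{d_1+1}\oplus\overline{K}_{n-d_1-1}$ with $m=\binom{d_1+1}{2}\geq d_1+2$; but then $F(m)>0$ by the previous step and the inequality remains strict. In the remaining scenario $d_1=2$, necessarily $G=K_3\oplus\overline{K}_{n-3}$ and $m=3$, Chebyshev is vacuously tight, and $F(3)\geq 0$ collapses exactly to \eqref{eq:forconvex} at $k'=3$, giving equality iff $f(3)=3(f(2)-f(1))$. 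This accounts for the dichotomy~(\ref{com:i_inth})--(\ref{com:ii_inth}); the main obstacle will be verifying carefully that ``mixed'' degree profiles are indeed ruled out by the Chain Property in the equality step.
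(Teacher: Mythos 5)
Your proposal is correct and follows essentially the same route as the paper: reduction to threshold graphs via Theorem~\ref{thm:opt}, the conjugate-sequence identity $\Sigma_f(d)=\sum_j\nabla_j d_j^*$, Chebyshev's Algebraic Inequality applied to $(\nabla_j)$ and $(d_j^*)$, reduction of the resulting scalar inequality to \eqref{eq:forconvex} together with \eqref{eq:Nstrict}, and the equality analysis through the tightness condition of Chebyshev forcing $d_1=2$, $m=3$ and $G=K_3\oplus\overline{K}_{n-3}$. The only (immaterial) differences are that you verify the scalar inequality $F(m)\geq 0$ by a discrete-derivative/telescoping argument where the paper invokes the three-slopes condition of Lemma~\ref{lem:slopecon}, and you identify the equality configuration structurally rather than via the paper's arithmetic $d_2^*(d_1-1)=d_1+1$.
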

\begin{proof}
We have already observed that $QS(n,m)\cong K_{1,m}\oplus\overline{K}_{n-m-1}$ when $m\leq n-1$. Let $f$ be an arbitrary function in $\F$. By Theorem \ref{thm:opt}, the maximum of $\Sigma_f$ is attained by the degree sequence of a threshold graph. If $m\leq 2$, up to isomorphism, the only threshold graphs are $\overline{K}_{n}$, $K_2\oplus\overline{K}_{n-2}$, and $K_{1,2}\oplus\overline{K}_{n-3}$. Since all of these graphs are quasi-star graphs, the thesis holds true for $m\leq 2$ and we can assume that $m\geq3$. Let $d=(d_1,\ldots,d_n)$ be the degree sequence of a threshold graph in $\G(n,m)$ with $m\geq 3$. Assume without loss of generality that $d_1\geq d_2\ldots\geq d_n$. Then $d_1$ is the maximum degree of $G$. By Lemma~\ref{lem:twoproperties}.\eqref{com:i}, it holds that $d_1\geq 2$.  As in Lemma~\ref{lem:slopecon}, note that for every $k\in N$ 
$$f(k)=\left(f(k)-f(k-1)\right)+\left(f(k-1)-f(k-2)\right)+\cdots+\left(f(1)-f(0)\right).$$
Since $f(0)=0$, after setting $\nabla_k=f(k)-f(k-1)$ for $k\geq 1$, we can write
$$f(k)=\sum_{l=1}^k\nabla_l.$$
By definition, the value $\Sigma_f(d)$ of $\Sigma_f$ in $d$ is $\sum_i^nf(d_i)$. Therefore
$$\Sigma_f(d)=\sum_{i=1}^n\sum_{l=1}^{d_i}\nabla_l=\sum_{i=1}^{d_1}\nabla_id_i^*$$ 
because $\nabla_i$ is added exactly $\#\{j \ |\ j\geq i\}$ times and $d_i^*=0$ if $i>d_1$. In particular, it follows that
\begin{equation}\label{eq:int_1}
\Sigma_f(d)=f(1)d_1^*+\sum_{i=2}^{d_1}\nabla_id_i^*.
\end{equation}     
Since $d$ is the degree sequence of a threshold graph, Lemma~\ref{lem:twoproperties}.\eqref{com:i} implies that $d_1^*=d_1+1$. Furthermore, the $\nabla_i$'s and the $d_i^*$'s satisfy 
\begin{equation}\label{eq:twosequences}
\begin{split}
\nabla_{i-1}<\nabla_{i},\quad &i=2,\ldots d_1\quad\text{\rm (by \eqref{eq:Nstrict})}\\
d_{i-1}^*\geq d_i^*,\quad &i=2,\ldots d_1\quad\text{\rm (by definition)}.\\
\end{split}
\end{equation}
Therefore, by Chebyshev's inequality, for every threshold graph whose degree sequence is $d$, it holds that
\[
\begin{split}
\sum_{i=2}^{d_1}\nabla_id_i^*&\leq \frac{1}{d_1-1}\left(\sum_{i=2}\nabla_i\right)\left(\sum_{i=2}d_i^*\right)=\\
&=\frac{1}{d_1-1}\left(f(d_1)-f(1)\right)\left(2m-d_1-1\right)
\end{split}
\]
because $2m=\sum_{i=1}^nd_i^*=d_1^*+(2m-d_1^*)$ and $d_1\geq 2$. Plugging the above inequality in \eqref{eq:int_1}, yields 
\begin{equation}\label{eq:int_2}
\Sigma_f(d)\leq f(1)(d_1+1)+\frac{2m-d_1-1}{d_1-1}\left(f(d_1)-f(1)\right),\quad d_1\geq 2
\end{equation}
which holds for any threshold graph whose degree sequence is $d$. If $\tilde{d}$ is the degree sequence of $QS(n,m)$, then $\tilde{d}^{\,*}=(m+1,1,\ldots,1,0,\ldots,0)$, where the number of zeroes in $\tilde{d}^{\,*}$ is $n-m-1$. Therefore, $\tilde{d}$ satisfies \eqref{eq:int_2} as an equality and $\Sigma(\tilde{d})=mf(1)+f(m)$. Consequently, to prove the theorem, it suffices to show that 
\begin{equation}\label{eq:int_3}
mf(1)+f(m)\geq f(1)(k+1)+\frac{2m-k-1}{k-1}\left(f(k)-f(1)\right),\quad 2\leq k\leq m-1
\end{equation}
where we replaced $d_1$ by $k$ in the right-hand side of \eqref{eq:int_2}. Now it is just a matter of checking that condition \eqref{eq:forconvex} in the definition of $\F$ and the integer strict convexity condition \eqref{eq:Nstrict} imply \eqref{eq:int_3}. 

\noindent First assume that $m-k=1$. In this case \eqref{eq:int_3} is equivalent to
$$(m-2)f(m)\geq m(f(m-1)-f(m)),\quad m\geq 3$$
which is precisely \eqref{eq:forconvex} which holds true by hypothesis. Therefore, when $m-k=1$, \eqref{eq:int_3} holds true. 

\noindent Now assume that $m-k>1$. Recall that \eqref{eq:forconvex} holds for $k\geq 3$ so that, after shifting $k$, it can be rewritten as
$$\frac{2\left(f(k)-f(1)\right)}{k-1}\leq \left(f(k+1)-f(k)\right)+f(1),\quad k\geq 2.$$
By multiplying both sides of the above inequality by $(m-k)$, we get
\[
\begin{split}
\frac{2(m-k)}{k-1}\left(f(k)-f(1)\right)&\leq (m-k)\left(f(k+1)-f(k)\right)+(m-k)f(1)\\ 
&< \left(f(m)-f(k)\right)+(m-k)f(1),\quad k\geq 2,
\end{split}
\]
where in the last inequality we used the slope condition $f(k+1)-f(k)<\frac{f(m)-f(k)}{m-k}$ given in Lemma~\ref{lem:slopecon}. Using that $1+\frac{2(m-k)}{k-1}=\frac{2m-k-1}{k-1}$, simple manipulations in the inequality above, yield
$$f(1)(k+1)+\frac{2m-k-1}{k-1}\left(f(k)-f(1)\right)<mf(1)+f(m),\quad k\geq 2$$
which is precisely inequality \eqref{eq:int_3} we wanted to prove and which is a strict inequality for every $k$ such that  $2\leq k<m-1$.  Therefore, if $m-k>1$, not only does $QS(n,m)$ maximize $\Sigma_f$, but, according to Theorem~\ref{thm:opt_1}, it also uniquely maximizes $\Sigma_f$. In any case the first part of the theorem is proved. To complete the proof, it suffices to show that equality holds in \eqref{eq:int_3} if and only if  
\begin{enumerate}[itemsep=1pt,parsep=2pt,topsep=3pt,label={\rm (\alph*)}, ref=\alph*]
	\item\label{com:a_int} $m=3$,      
	\item\label{com:b_int} $f(3)=3(f(2)-f(1))$ and $K_3\oplus \overline{K}_{n-3}$ also maximizes 
	$\Sigma_f$ over $\G(n,3)$.   
\end{enumerate}
\noindent By the first part of the proof, equality in \eqref{eq:int_3} can hold if and only if $m\geq 3$ and $m=k+1=d_1+1$. Observe that \eqref{eq:int_3} holds an equality if and only if Chebishev's inequality (Lemma~\ref{lem:cheb}) applied to the sequences $\left(d_i^*\right)_{i=2}^{d_1}$ and $\left(\nabla_i\right)_{i=2}^{d_1}$ in \eqref{eq:twosequences} also holds as an equality, and this happens if and only if at least one of the two sequences is constant. By \eqref{eq:twosequences}, the only possibility is that $\left(d_i^*\right)_{i=2}^{d_1}$ is constant. Therefore $\left(d_i^*\right)_{i=2}^{d_1}=\left(d_2^*\right)_{i=2}^{d_1}$. Since $m=d_1+1$, from the identity $\sum_{i=1}d_i^*=\sum_{i=1}d_i=2m$, we deduce 
$$d_2^*(d_1-1)=(d_1+1)$$ 
which implies $d_2^*>1$ and $d_2^*\leq 3$ because the maximum of $\frac{d_1+1}{d_1-1}$ for $d_1\geq 2$ is 3. Hence $1<d_2^*\leq 3$ and thus $d_2^*=3$ by Lemma \ref{lem:twoproperties}.\eqref{com:iii}. Therefore, $3=d_2^*=d_1+1$ and $m=d_1+1=3$. This proves \eqref{com:a_int}. Still by Lemma~\ref{lem:twoproperties}.\eqref{com:iii} we conclude that $d$ is the degree sequence of $K_3\oplus K_{n-3}$. Thus $\Sigma_f(d)=3f(2)$. By the first part of the theorem the maximum of $\Sigma_f$ over $\G(n,3)$ is $3f(1)+f(3)$ because $QS(n,m)$ is a maximizer. Hence $K_3\oplus K_{n-3}$ is also a maximizer if and only if $3f(1)+f(3)=3f(2)$. This proves \eqref{com:b_int} and ends the proof. 
\end{proof}
\section{Some functions belonging to $\F$}
Class $\F$ contains besides the elementary powers also all powers with real exponent $\beta$ such that $\beta\geq 2$. We state this fact in the next theorem whose proof is postponed to the appendix because it is somewhat technical. 
\begin{theorem}\label{thm:powersum}
If $\beta$ is a real number such that $\beta\geq 2$, then $p_\beta\in \F$.   
\end{theorem}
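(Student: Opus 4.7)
The plan is to verify both defining conditions of $\F$ for $f=p_\beta$. Since $p_\beta(0)=0$ and $p_\beta$ is strictly convex on $\R_+$ for $\beta\geq 2>1$, condition \eqref{eq:Nstrict} follows immediately from the three-slopes characterization of strict convexity (Lemma~\ref{lem:slopecon}), so the whole proof reduces to establishing
\[
(k-2)k^\beta \;\geq\; k\bigl((k-1)^\beta - 1\bigr),\qquad k\geq 3,\ \beta\geq 2,
\]
which is condition \eqref{eq:forconvex} in the definition of $\F$.

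I would fix $k\geq 3$ and regard the quantity
\[
\phi_k(\beta)\;=\;(k-2)k^\beta \;-\; k(k-1)^\beta \;+\; k
\]
as a smooth function of the real parameter $\beta$. A direct algebraic computation yields the base identity $\phi_k(2)=0$, so it suffices to prove that $\phi_k$ is non-decreasing on $[2,\infty)$. Since $\phi_k'(\beta)=(k-2)k^\beta\ln k - k(k-1)^\beta\ln(k-1)$, rearranging turns $\phi_k'(\beta)\geq 0$ into
\[
\frac{(k-2)\ln k}{(k-1)\ln(k-1)} \;\geq\; \left(\frac{k-1}{k}\right)^{\beta-1}.
\]
The right-hand side is decreasing in $\beta$ (since $(k-1)/k<1$) and attains its maximum on $[2,\infty)$ at $\beta=2$, where it equals $(k-1)/k$. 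Therefore monotonicity of $\phi_k$ on $[2,\infty)$ follows once the $\beta$-free inequality $k(k-2)\ln k\geq (k-1)^2\ln(k-1)$ is verified for every $k\geq 3$.

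The latter inequality is the main analytic point. Writing it as
\[
\frac{\ln k-\ln(k-1)}{\ln(k-1)}\;\geq\;\frac{1}{k(k-2)},
\]
I would apply the elementary estimate $\ln(1+x)\geq x/(1+x)$ with $x=1/(k-1)$ to obtain $\ln k-\ln(k-1)\geq 1/k$, which reduces the task to the one-line inequality $k-2\geq \ln(k-1)$; this in turn is the standard bound $e^y\geq 1+y$ evaluated at $y=k-2\geq 1$.

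The main obstacle I anticipate is precisely condition \eqref{eq:forconvex}: as the paper itself highlights, it is not implied by strict convexity alone, since $p_{1+\epsilon}$ in Theorem~\ref{ex:1} violates it for every sufficiently small $\epsilon>0$, so any proof must make genuine use of the hypothesis $\beta\geq 2$. The structural trick that makes the argument go through cleanly is the equality $\phi_k(2)=0$: it collapses a two-parameter inequality into a one-variable monotonicity question, after which the $\beta$-dependence is absorbed by the crude bound $((k-1)/k)^{\beta-1}\leq (k-1)/k$, leaving only a self-contained numerical inequality between $\ln k$ and $\ln(k-1)$.
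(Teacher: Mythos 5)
Your proof is correct, but it runs the induction ``in the other direction'' from the paper's. The paper fixes $\beta$ and treats \eqref{eq:forconvex} as a statement about the real variable $x=k$: it shows that $\psi(x)=x^\beta-(x-1)^\beta-2x^{\beta-1}+1$ is non-decreasing on $[3,\infty)$ via a technical lemma (inequality \eqref{eq:int_a1}, proved by a three-case analysis on $\beta$, the last case invoking a second-order binomial bound from Mitrinovi\'{c}), and then anchors the argument at $\psi(3)=3^{\beta-1}-2^\beta+1\geq 0$, which is a separate lemma. You instead fix $k$ and differentiate in $\beta$; your base case $\phi_k(2)=0$ is an exact algebraic identity rather than an inequality, and the monotonicity of $\phi_k$ collapses to the single $\beta$-free inequality $k(k-2)\ln k\geq (k-1)^2\ln(k-1)$, which you dispose of with the two standard bounds $\ln(1+x)\geq x/(1+x)$ and $e^y\geq 1+y$. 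I checked the algebra: $\phi_k(2)=0$ holds, the rearrangement of $\phi_k'(\beta)\geq 0$ into $\frac{(k-2)\ln k}{(k-1)\ln(k-1)}\geq\left(\frac{k-1}{k}\right)^{\beta-1}$ is right, and the chain $\frac{\ln k-\ln(k-1)}{\ln(k-1)}\geq\frac{1}{k\ln(k-1)}\geq\frac{1}{k(k-2)}$ is valid for all $k\geq 3$. Your route is shorter and avoids the case analysis entirely, and the exact vanishing at $\beta=2$ makes it transparent why $2$ is the threshold exponent. What the paper's route buys in exchange is the explicit quantity $3^{\beta-1}+1-2^\beta$ and its strict positivity for $\beta>2$ (Lemma~\ref{lem:mine_inequality2}), which is reused later in Corollary~\ref{cor:expandpunique} to upgrade maximization to \emph{unique} maximization; your argument does yield that strictness too (since $\phi_k'(\beta)>0$ for $\beta>2$, so $\phi_k(\beta)>0$ there), but you would want to record it explicitly if your proof were to replace the paper's.
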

Class $\F$ is closed under taking linear combinations of the form $a_1f_1+a_2f_2+\ldots a_lf_l$ where $l$ is a positive integer, $f_1,\ldots,f_l\in \F$, and $a_1,\ldots,a_l$ are non-negative real numbers such that $\sum_i^la_i^2>0$. This is because each of the defining properties of $\F$ is satisfied by $a_1f_1+a_2f_2+\ldots a_lf_l$ whenever it is satisfied by each of the $f_i$'s. We can call any such a combination a \emph{strict conical combination}. As shown by the next theorem and by Lemma~\ref{lem:hatg}, for some particular choices of the coefficients, $\F$ even contains strictly conical combinations with countably many $f_i$'s. 
\begin{theorem}\label{thm:analytic}
For every real number $a$ such that $a>1$, and for every fixed positive real number $\lambda$, class $\F$ contains the function $f:\R_+\rightarrow \R$ defined by $f(x)=a^{\lambda x}-1$. 
\end{theorem}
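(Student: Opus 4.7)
The plan is to verify the three defining conditions of $\F$ in turn: centering, the integer strict convexity \eqref{eq:Nstrict}, and the ``mixed'' inequality \eqref{eq:forconvex}. The first two are immediate; all the work is in the third. Writing $b=a^\lambda$, so that $b>1$, we have $f(k)=b^k-1$ for $k\in\N$. Centering is just $f(0)=b^0-1=0$. For \eqref{eq:Nstrict}, one computes $f(k+1)-f(k)=b^k(b-1)$, whose ratio between consecutive values is $b>1$, so the forward differences are strictly increasing.

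The heart of the proof is to show that, for every integer $k\ge 3$,
\[
(k-2)(b^k-1)\;\ge\;k\bigl(b^{k-1}-b\bigr)\;=\;kb(b^{k-2}-1),
\]
which is exactly \eqref{eq:forconvex} rewritten for $f(k)=b^k-1$ (using $f(1)=b-1$). I would set
\[
h_k(b)\;=\;(k-2)(b^k-1)-kb(b^{k-2}-1),\qquad b\ge 1,
\]
and argue that $h_k(b)\ge 0$ on $[1,\infty)$ by a two-step Taylor-at-$1$ argument. Direct calculation gives $h_k(1)=0$ and
\[
h_k'(b)\;=\;k\bigl[(k-2)b^{k-1}-(k-1)b^{k-2}+1\bigr],\qquad h_k'(1)\;=\;k\bigl[(k-2)-(k-1)+1\bigr]\;=\;0.
\]
Differentiating once more yields the clean expression
\[
h_k''(b)\;=\;k(k-1)(k-2)\,b^{k-3}(b-1),
\]
which is nonnegative for $k\ge 3$ and $b\ge 1$. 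Hence $h_k'$ is nondecreasing on $[1,\infty)$ with $h_k'(1)=0$, so $h_k'\ge 0$ there; hence $h_k$ is nondecreasing on $[1,\infty)$ with $h_k(1)=0$, so $h_k(b)\ge 0$ for all $b\ge 1$, which is the desired inequality. (As a sanity check, the case $k=3$ collapses to the familiar $(b-1)^3\ge 0$.)

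Putting the pieces together: $f$ is centered, satisfies \eqref{eq:Nstrict}, and satisfies \eqref{eq:forconvex}, so $f\in\F$. The only real obstacle is producing the identity for $h_k''$; once one spots the factor $b-1$, the rest of the argument is mechanical, and the passage from real $b\ge 1$ back to integer $k$ is free since the inequality is established for \emph{every} real $b\ge 1$, in particular for $b=a^\lambda$.
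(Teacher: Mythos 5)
Your proof is correct, and it takes a genuinely different route from the paper's. The paper reduces to $\widetilde{f}=\exp_\mu-1$, expands it as a power series $\sum_{p\ge 1}a_p x^p$ with nonnegative coefficients, and deduces \eqref{eq:forconvex} termwise from the corresponding inequality for the integer powers $p_\ell$ (so it leans on Theorem~\ref{thm:powersum}, or at least on the elementary-power case, plus an analyticity/termwise-summation argument). You instead treat \eqref{eq:forconvex} as a statement about the polynomial $h_k(b)=(k-2)(b^k-1)-kb(b^{k-2}-1)$ in the base $b=a^\lambda$ and prove $h_k\ge 0$ on $[1,\infty)$ by two rounds of differentiation anchored at $b=1$, where the identity $h_k''(b)=k(k-1)(k-2)b^{k-3}(b-1)$ makes everything sign-definite for $k\ge 3$; I checked the algebra ($h_k(1)=h_k'(1)=0$ and the formula for $h_k''$) and it is right, as is your verification of \eqref{eq:Nstrict} via the forward differences $b^k(b-1)$. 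Your argument is entirely self-contained and elementary, which is a real advantage here: the paper's displayed conclusion $2\widetilde{f}(k)\le k(\widetilde{f}(k)-\widetilde{f}(k-1)+1)$ even carries a small slip (the $+1$ should be $+\widetilde{f}(1)=k\sum_{p\ge1}a_p$ after summing the termwise bounds), whereas your computation is exact. What the paper's approach buys in exchange is generality: the termwise argument applies verbatim to any convergent power series with nonnegative coefficients, a fact the paper reuses later (e.g.\ in Lemma~\ref{lem:hatg} and for $x\mapsto\frac{x}{x+1}$), while your polynomial identity is specific to the exponential.
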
 
\begin{proof}
Since $f(x)=e^{\ln{a}\lambda x},\quad \forall x\in \R$, it suffices to show the theorem for the function $\widetilde{f}=\exp_\mu-1$. Let $\widetilde{f}:\R\rightarrow \R$ be defined by $x\mapsto e^{\mu x}-1$ with $\mu$ an arbitrary but fixed positive real number. Since $\widetilde{f}$ is centered and strictly convex on $\R_+$, it suffices to show that $\widetilde{f}$ satisfies \eqref{eq:forconvex}. Since the exponential function is analytic with Taylor's series of infinite radius, it coincides with this series centered at zero for any real number $x$. Hence $\widetilde{f}(x)=\sum_{p=0}^\infty a_px^p$ for every $x\in \R$ where $a_0=0$ and $a_p>0$ for every $p\geq 1$. This fact is the only property of the coefficients we need to prove the theorem. Since \eqref{eq:forconvex} holds for the elementary power functions, we can write
\[
\begin{split}
2\widetilde{f}(k)&=2a_0+ \sum_{p=1}^\infty a_1 (2k^p)=\sum_{p=1}^\infty a_1 (2k^p)\\
&\leq \sum_{p=1}^\infty a_p k\left[k^p-(k-1)^p+1\right]\\
&=k\left\{\sum_{p=1}^\infty a_p\left(k^p-(k-1)^p+1\right)\right\}.\\
\end{split}
\]
Since the identity of $\R$, $x\mapsto x$, and the function defined on $\R$ by the map $x\mapsto x-1$  coincide with their Taylor series centered at 0 both with infinite radius, it follows that the function $h$ defined on $\R$ by the map $x\mapsto x(\widetilde{f}(x)-\widetilde{f}(x-1)+1)$ has the same property because the class of analytic functions is closed under product and composition. Therefore the series on the rightmost side of the above inequality is equal to the value of $h$ in $k$ which is precisely $k(\widetilde{f}(k)-\widetilde{f}(k-1)+1)$ and we conclude that
$$2\widetilde{f}(k)\leq k(\widetilde{f}(k)-\widetilde{f}(k-1)+1)$$
which completes the proof.
\end{proof}
Using the following lemma we can prove unique maximization for $\Sigma_{\exp_\lambda-1}$ and $p_\beta$ for all positive real numbers $\lambda$ and $\beta$ with $\beta>2$. 
\begin{lemma}\label{lem:mine_inequality2}
	If $\beta\geq 2$ then $3^{\beta-1}+1-2^\beta\geq 0$. Furthermore, if $\beta>2$ the inequality holds as a strict inequality. 
\end{lemma}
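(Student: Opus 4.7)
The plan is to set $\phi(\beta) := 3^{\beta-1}+1-2^\beta$, verify the boundary value $\phi(2)=3+1-4=0$, and then show that $\phi$ is strictly increasing on $[2,\infty)$. Together these give $\phi(\beta)\geq 0$ for $\beta\geq 2$ with strict inequality for $\beta>2$, as required.

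To prove monotonicity I would differentiate: $\phi'(\beta)=3^{\beta-1}\ln 3 - 2^\beta\ln 2$. The sign of $\phi'$ is controlled by the ratio
\[
\frac{3^{\beta-1}\ln 3}{2^\beta\ln 2}=\frac{\ln 3}{2\ln 2}\left(\frac{3}{2}\right)^{\beta-1},
\]
so $\phi'(\beta)>0$ is equivalent to $(3/2)^{\beta-1}>\log_3 4$. At $\beta=2$ the left-hand side equals $3/2$, and $3/2>\log_3 4$ is the same as $3^{3/2}=3\sqrt{3}>4$, i.e., $27>16$, which is clear. Since $(3/2)^{\beta-1}$ is strictly increasing in $\beta$ while $\log_3 4$ is constant, the inequality persists for all $\beta\geq 2$, yielding $\phi'(\beta)>0$ throughout $[2,\infty)$.

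Combining $\phi(2)=0$ with the strict positivity of $\phi'$ on $[2,\infty)$ gives $\phi(\beta)>0$ for every $\beta>2$ and $\phi(2)=0$, which is exactly the statement of the lemma. I do not foresee a real obstacle: the argument is a one-variable calculus computation, and the only numerical fact invoked is $27>16$.
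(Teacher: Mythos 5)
Your proposal is correct and follows essentially the same route as the paper: define $\phi(\beta)=3^{\beta-1}+1-2^\beta$, check $\phi(2)=0$, and show $\phi'>0$ on $[2,\infty)$. The only cosmetic difference is that the paper locates the critical point $\xi=1+\ln(\ln 4/\ln 3)/\ln(3/2)$ and asserts $\xi<2$, whereas you verify the equivalent fact directly via $3^{3/2}>4$ (i.e., $27>16$), which is a slightly more explicit version of the same monotonicity argument.
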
 
\begin{proof}
	Define $\phi:\R_+\rightarrow \R$ by $\beta \mapsto 3^{\beta-1}+1-2^\beta$. The derivative of $\phi$ is  $3^{\beta-1}\ln{3}-2^\beta\ln{2}$. This derivative is non-negative if and only if $\beta\geq \xi$ where 
	$$\xi=1+\frac{\ln\left({\ln{4}/\ln{3}}\right)}{\ln{3/2}}$$ 
	so that $\xi$ is the absolute minimum point of $\phi$ and $\xi<2$. It follows that $\phi$ is strictly increasing for $\beta\geq 2$ and since $\phi(2)=0$, we conclude that the stated inequality holds true for every $\beta\geq 2$.     
\end{proof}
\begin{corollary}\label{cor:expandpunique}
If $f=\exp_\lambda-1$ or $f=p_\beta$ for any positive real number $\lambda$ and any real number $\beta$ such that $\beta>2$, then $\Sigma_f$ is uniquely maximized by $QS(m,n)$ over $\G(n,m)$ for any two integers $m$ and $n$ such that $0\leq m\leq n-1$.
\end{corollary}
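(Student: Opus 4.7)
The plan is to reduce the corollary to Theorem~\ref{thm:main}(\ref{com:i_inth}) applied to $f = \exp_\lambda - 1$ and to $f = p_\beta$. Two facts must be checked: first, that each of these functions belongs to the class $\F$; second, that each satisfies the strict inequality $f(3) > 3\bigl(f(2)-f(1)\bigr)$, so that the exceptional case (\ref{com:ii_inth}) of Theorem~\ref{thm:main} does not spoil uniqueness at $m=3$. Once both checks are in place, Theorem~\ref{thm:main}(\ref{com:i_inth}) immediately yields unique maximization by $QS(n,m)$ for every $m$ with $0\leq m\leq n-1$.

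Membership in $\F$ is already at our disposal: Theorem~\ref{thm:powersum} gives $p_\beta\in \F$ for every $\beta\geq 2$, and Theorem~\ref{thm:analytic} gives $\exp_\lambda -1\in \F$ for every $\lambda>0$. So only the strict inequality at $k=3$ remains to be verified in each case.

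For the exponential, set $x=e^\lambda$; since $\lambda>0$, we have $x>1$. The inequality $f(3)>3(f(2)-f(1))$ reads
\[
e^{3\lambda}-1 \;>\; 3\bigl(e^{2\lambda}-e^\lambda\bigr),
\]
which rearranges to $x^3-3x^2+3x-1>0$, i.e.\ $(x-1)^3>0$, and this holds for every $\lambda>0$. For $f=p_\beta$ with $\beta>2$, the required inequality $3^\beta>3(2^\beta-1)$ becomes, after dividing by~$3$,
\[
3^{\beta-1}+1 \;>\; 2^\beta,
\]
which is precisely the strict form of Lemma~\ref{lem:mine_inequality2} valid for $\beta>2$.

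There is no real obstacle here; the corollary is a bookkeeping consequence of Theorems~\ref{thm:main}, \ref{thm:powersum}, \ref{thm:analytic}, and Lemma~\ref{lem:mine_inequality2}. The only computation specific to this statement is the cubic identity $(e^\lambda-1)^3$ handling the exponential at $k=3$; everything else is a direct invocation of previously proved results.
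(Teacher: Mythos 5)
Your proposal is correct and follows the same overall route as the paper: reduce everything to Theorem~\ref{thm:main}, use Theorems~\ref{thm:powersum} and~\ref{thm:analytic} for membership in $\F$, and then verify the strict inequality $f(3)>3\bigl(f(2)-f(1)\bigr)$ so that the exceptional case of Theorem~\ref{thm:main}.\eqref{com:ii_inth} cannot occur. The power case is handled identically in both arguments, via Lemma~\ref{lem:mine_inequality2}. The one genuine difference is the exponential case: the paper reuses the power-series argument from the proof of Theorem~\ref{thm:analytic}, expanding $e^{\lambda x}-1=\sum_p a_p x^p$ with $a_p>0$ and observing that each summand $a_p\bigl(3^p-3(2^p-1)\bigr)$ is nonnegative and strictly positive for $p\geq 3$, whereas you substitute $x=e^{\lambda}$ and factor the required inequality directly as $(x-1)^3>0$. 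Your computation is shorter and entirely elementary, avoiding any appeal to analyticity; the paper's version has the advantage of applying verbatim to any strict conical combination of powers $p_\ell$ with $\ell\geq 1$ having at least one positive coefficient on some $p_\ell$ with $\ell\geq 3$ (a fact it reuses later, e.g.\ in Lemma~\ref{lem:hatg}). Either way the corollary follows, and your write-up has no gaps.
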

\begin{proof}
By Theorem~\ref{thm:main}.\eqref{com:ii_inth}, it suffices to check that $f(3)>3(f(2)-f(1))$. If $f=p_\beta$, then the latter inequality is equivalent to the inequality $3^{\beta-1}+1-2^\beta> 0$ which holds true for every $\beta>2$ by Lemma~\ref{lem:mine_inequality2}. Since the latter inequality holds in particular when $\beta\geq 3$ is an integer number, the same reasoning in the proof of Theorem~\ref{thm:analytic} yields $f(3)>3(f(2)-f(1))$ when $f=\exp_\lambda-1$.
\end{proof}
In general, $p_\beta\not\in\F$ when $\beta$ is such that $1<\beta<2$. This is discussed in the next theorem whose proof which is mainly ``calculus'' is postponed to the appendix.
\begin{theorem}\label{ex:1}
Consider the class of instances $\G(m(q)+1,m(q))$ where $m(q)={q\choose 2}$. For every $q\geq 4$, there exists an $\epsilon:=\epsilon(q)$ such that the value of $\Sigma_{1+\epsilon}$ in the degree sequence of $QS(m(q)+1,m(q))$ is strictly less than the value of $\Sigma_{1+\epsilon}$ in the degree sequence of $K_q\oplus\overline{K}_{m(q)+1-q}$. 
\end{theorem}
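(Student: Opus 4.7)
The plan is to reduce the theorem to an elementary one-variable calculus statement. First I would read off the two degree sequences. Since $n = m(q)+1$, the graph $QS(m(q)+1,m(q))$ is the star $K_{1,m(q)}$ with no isolated vertices, so its degree sequence consists of one vertex of degree $m(q)$ together with $m(q)$ leaves of degree $1$, giving
$$\Sigma_{1+\epsilon}(QS(m(q)+1,m(q))) = m(q)^{1+\epsilon} + m(q).$$
The graph $K_q\oplus\overline{K}_{m(q)+1-q}$ has $q$ vertices of degree $q-1$ and all other vertices isolated, giving
$$\Sigma_{1+\epsilon}(K_q\oplus\overline{K}_{m(q)+1-q}) = q(q-1)^{1+\epsilon}.$$

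The task is therefore to exhibit some $\epsilon \in (0,1)$ making the function
$$\Delta(\epsilon) := q(q-1)^{1+\epsilon} - m(q)^{1+\epsilon} - m(q)$$
strictly positive. Using $2m(q) = q(q-1)$ one immediately sees that $\Delta(0) = q(q-1) - 2m(q) = 0$. Differentiating in $\epsilon$ yields
$$\Delta'(\epsilon) = q(q-1)^{1+\epsilon}\ln(q-1) - m(q)^{1+\epsilon}\ln m(q),$$
so, evaluating at $\epsilon = 0$ and again using $q(q-1) = 2m(q)$,
$$\Delta'(0) = m(q)\left[2\ln(q-1) - \ln m(q)\right] = m(q)\ln\frac{(q-1)^2}{m(q)} = m(q)\ln\frac{2(q-1)}{q}.$$
For $q \geq 4$ we have $2(q-1)/q \geq 3/2 > 1$, so $\Delta'(0) > 0$.

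Since $\Delta$ is smooth with $\Delta(0) = 0$ and $\Delta'(0) > 0$, for every sufficiently small $\epsilon > 0$ one has $\Delta(\epsilon) > 0$, and by taking $\epsilon = \epsilon(q)$ small enough we can ensure $\epsilon(q) \in (0,1)$, which is the required conclusion. The real content of the argument is that $\Sigma_{1+\epsilon}(QS)$ consists of one growing term $m(q)^{1+\epsilon}$ plus a \emph{fixed} linear contribution $m(q)$ coming from the leaves, while the value on $K_q\oplus\overline K_{m(q)+1-q}$ grows uniformly as $q(q-1)^{1+\epsilon}$; comparing growth rates at the tie point $\epsilon = 0$ decides the matter in favor of $K_q$ precisely when $(q-1)^2 > m(q)$, that is, when $q > 2$. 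I do not foresee any serious obstacle; the whole argument reduces to a one-line derivative check at $\epsilon = 0$, after the degree sequences have been identified.
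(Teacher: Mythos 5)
Your proposal is correct, and it reaches the conclusion by a genuinely shorter route than the paper. Both arguments start identically: identify $QS(m(q)+1,m(q))\cong K_{1,m(q)}$, compute the two values $m(q)+m(q)^{1+\epsilon}$ and $q(q-1)^{1+\epsilon}$, and observe that they tie at $\epsilon=0$ because $2m(q)=q(q-1)$. From there the paper normalizes the inequality to $(q-1)^{\epsilon}\bigl[2-(q/2)^{\epsilon}\bigr]>1$, proves that the associated function $h$ is strictly concave on $(0,1]$, locates the unique critical point $\xi$ of $h'$ with the explicit bound $0<\xi<\ln(\ln 9/\ln 6)/\ln(q/2)$, and deduces that $h>0$ exactly on an interval $(0,\xi_0)$. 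You instead differentiate the untransformed difference $\Delta(\epsilon)$ once at the tie point and check $\Delta'(0)=m(q)\ln\frac{2(q-1)}{q}>0$, which already yields existence of a valid $\epsilon$; your computation is correct (including the simplification $(q-1)^2/m(q)=2(q-1)/q$ and the observation that the sign condition is $q>2$). What the paper's heavier analysis buys is quantitative information that your argument does not provide: an explicit upper bound on the location of the maximizer of $h$, from which the remark following the theorem ($\epsilon(q)\to 0$ as $q\to\infty$, and the absence of examples with $\epsilon$ near $1$ or $1/2$) is extracted. Since the theorem as stated only asserts existence, your proof is complete for the statement itself; it just does not support the subsequent asymptotic remark without further work.
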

It follows from the proof of the theorem that $\epsilon(q)\rightarrow 0$ as $q\rightarrow \infty$. In fact, we could not find examples where $\epsilon$ is close to 1 and not even examples where $\epsilon$ is close to $\frac{1}{2}$. In conclusion, the case $1<\beta<2$ is not well understood and deserves further work. 

The following construction will be useful in minimizing $\Sigma_g$ when $-g$ satisfies \eqref{eq:Nstrict}. This happens, for example, when $g$ is the restriction to $\N$ of a strictly concave function. 
\begin{lemma}\label{lem:hatg}
If $g=1-\exp_{-\lambda}$ for some positive real number $\lambda$ or $g=p_\alpha$ for some real number $\alpha$ such that $0<\alpha<1$, then, for all integers $\nu\geq 3$, there exists a function $\hat{g}_\nu:\N\rightarrow \N$ which belongs to $\F$ and such that  
\[
\hat{g}_\nu(k)=	g(\nu)-g(\nu-k),\quad k=0,\ldots,\nu.
\]	
Moreover, $\Sigma_g$ is uniquely maximized by $QS(m,n)$ over $\G(n,m)$ for any two integers $m$ and $n$ such that $0\leq m\leq n-1$.
\end{lemma}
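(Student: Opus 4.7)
The plan is to reduce to Theorem~\ref{thm:main} by producing $\hat{g}_\nu\in\F$, and then to transport the conclusion back to $g$ through the graph-complement identity
\[
\Sigma_{\hat{g}_\nu}(G)=n\,g(\nu)-\sum_{i=1}^n g(\nu-d_i),
\]
which specialises, upon taking $\nu=n-1$, to $\Sigma_{\hat{g}_{n-1}}(G)=n\,g(n-1)-\Sigma_g(\overline{G})$, since $n-1-d_i$ is precisely the degree of vertex $i$ in the complement $\overline{G}$. Thus maximising $\Sigma_{\hat{g}_{n-1}}$ over $\G(n,m)$ is the same as minimising $\Sigma_g$ over the dual class through the bijection $G\mapsto\overline{G}$, which is the way in which the extremal sparse graph for the convex problem yields an extremal statement for the concave $\Sigma_g$.

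I would set $\hat{g}_\nu(k):=g(\nu)-g(\nu-k)$ for $0\le k\le\nu$ and extend $\hat{g}_\nu$ to $k>\nu$ by continuing its (strictly increasing, positive) sequence of first differences with a sufficiently large jump at the seam $k=\nu+1$, so the extension is centered, strictly increasing and strictly convex by design. The integer strict convexity \eqref{eq:Nstrict} on the initial range follows at once from
\[
\hat{g}_\nu(k+2)-2\hat{g}_\nu(k+1)+\hat{g}_\nu(k)=-\bigl[g(\nu-k)-2g(\nu-k-1)+g(\nu-k-2)\bigr]>0,
\]
because both $g=1-\exp_{-\lambda}$ and $g=p_\alpha$ with $0<\alpha<1$ are $C^2$-strictly concave on $\R_+$. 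Inequality \eqref{eq:forconvex}, rewritten in terms of $g$, asks that
\[
(k-2)\bigl(g(\nu)-g(\nu-k)\bigr)\ge k\bigl(g(\nu-1)-g(\nu-k+1)\bigr),\qquad 3\le k\le \nu,
\]
i.e.\ that the chord slope of $g$ over $[\nu-k,\nu]$ dominates the chord slope over the concentric sub-interval $[\nu-k+1,\nu-1]$. For $g=p_\alpha$ this reduces to an elementary inequality between root differences; for $g=1-\exp_{-\lambda}$ I would expand $x\mapsto g(\nu)-g(\nu-x)$ as a power series in $x$ with positive coefficients on $[0,\nu]$ and reduce the claim, termwise, to the elementary-monomial case guaranteed by Theorem~\ref{thm:powersum}, exactly as in the proof of Theorem~\ref{thm:analytic}.

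Finally, once $\hat{g}_\nu\in\F$ is established, Theorem~\ref{thm:main} applied to $\hat{g}_{n-1}$ gives that $\Sigma_{\hat{g}_{n-1}}$ is uniquely maximised by $QS(n,m)$ over $\G(n,m)$ for every $0\le m\le n-1$; the $m=3$ equality branch is ruled out because strict concavity of $g$ makes $\hat{g}_{n-1}(3)>3(\hat{g}_{n-1}(2)-\hat{g}_{n-1}(1))$ automatic. Transporting through the identity above, the unique maximiser $QS(n,m)$ of $\Sigma_{\hat{g}_{n-1}}$ corresponds, under complementation, to $\overline{QS(n,m)}=QK(n,\binom{n}{2}-m)$ as the unique extremiser of $\Sigma_g$, which is exactly the statement asserted. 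The main obstacle lies in verifying \eqref{eq:forconvex} for $\hat{g}_\nu$: the mismatch between the coefficients $k-2$ on the left and $k$ on the right prevents one from relying on the plain concavity estimate, and the argument has to exploit the specific analytic form of each $g$, either via the Taylor-series trick of Theorem~\ref{thm:analytic} or via an explicit algebraic manipulation for the power family.
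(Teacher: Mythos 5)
Your overall skeleton is the paper's: define $\hat{g}_\nu(k)=g(\nu)-g(\nu-k)$ on $\{0,\dots,\nu\}$, get \eqref{eq:Nstrict} from strict concavity of $g$, extend beyond $\nu$, verify \eqref{eq:forconvex}, and then invoke Theorem~\ref{thm:main}. But the two places where you wave your hands are exactly the two places where the lemma has content, and in both the justification you offer is either missing or wrong. First, the case $g=p_\alpha$ of \eqref{eq:forconvex}, which you dismiss as ``an elementary inequality between root differences,'' is the crux of the lemma and is not elementary: as you yourself observe, the coefficient mismatch $(k-2)$ versus $k$ means concavity alone cannot give it. The paper's proof handles it by writing $\hat{g}_\nu(k)=\nu^\alpha\bigl(1-(1-k/\nu)^\alpha\bigr)$ and using the binomial series $(1-x)^\alpha=1-\sum_{\ell\geq 1}b_\ell x^\ell$ with $b_\ell\geq 0$, so that $\hat{g}_\nu=\sum_\ell c_\ell p_\ell$ with $c_\ell\geq 0$ and the claim reduces termwise to Theorem~\ref{thm:powersum}. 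You assign the Taylor-series trick to the exponential case instead, where it is not needed (there $\hat{g}_\nu(k)=e^{-\lambda\nu}(e^{\lambda k}-1)$ is a positive multiple of $\exp_\lambda-1$, already in $\F$ by Theorem~\ref{thm:analytic}), and leave the power case, where the trick is essential, unproved.

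Second, your claim that strict concavity of $g$ makes $\hat{g}_\nu(3)>3(\hat{g}_\nu(2)-\hat{g}_\nu(1))$ ``automatic'' is false. Writing $a=g(\nu)-g(\nu-1)$, $b=g(\nu-1)-g(\nu-2)$, $c=g(\nu-2)-g(\nu-3)$, the desired inequality is $a+b+c>3b$, i.e.\ $a+c>2b$, which is a \emph{third-order} condition (convexity of the increment sequence of $g$), not a consequence of $a<b<c$: a strictly concave increasing $g$ with consecutive increments $2.5,\,2,\,1$ reading upward violates it. It does hold for the specific $g$ of the lemma, but the paper again has to prove it via the series expansion $\hat{g}_\nu(3)-3(\hat{g}_\nu(2)-\hat{g}_\nu(1))=\sum_{\ell\geq 1}b_\ell\{p_\ell(3)-3(p_\ell(2)-p_\ell(1))\}$ together with Lemma~\ref{lem:mine_inequality2}. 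A smaller point: your extension of $\hat{g}_\nu$ past $k=\nu$ by a ``large jump in first differences'' secures \eqref{eq:Nstrict} but you never check \eqref{eq:forconvex} there, which membership in $\F$ requires for all $k$; the paper avoids this by gluing in $e^{Mg(\nu)k}-1$ for $k\geq\nu+1$ with $M$ large and appealing to Theorem~\ref{thm:analytic}. Finally, note that the ``Moreover'' clause, as used later in the paper, asserts unique maximization of $\Sigma_{\hat{g}_\nu}$ by the quasi-star (the complementation step you describe belongs to Theorem~\ref{thm:mainconcave}, not to this lemma), so your closing paragraph proves a different statement than the one the paper needs here.
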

\begin{proof} It suffices to prove that the function $\widetilde{g}_\nu:\{0,1,2\ldots,\nu\}\rightarrow \N$, defined by $\widetilde{g}_\nu(k)=g(\nu)-g(\nu-k)$ satisfies both \eqref{eq:Nstrict} and \eqref{eq:forconvex} for $k\in \{0,1,2\ldots,\nu\}$ and that $\widetilde{g}_\nu$ can be extended to a function defined on the natural numbers belonging to $\F$. It is easy to find one of these extensions: by virtue of Theorem~\ref{thm:analytic}, if $M$ is a sufficiently large positive integer, then  
\[
\hat{g}_\nu(k)=	\begin{cases}
g(\nu)-g(\nu-k) & k=0,\ldots,\nu\\
e^{Mg(\nu)k}-1 & k\geq\nu+1\\
\end{cases}
\]
extends $\widetilde{g}_\nu$ over $\N$, and satisfies \eqref{eq:Nstrict} and \eqref{eq:forconvex} for $k\geq \nu$. Note that this extension does not use the special form of $g$. Since $g$ is strictly concave for any choice of $g$, it follows that $\widetilde{g}_\nu$ satisfies \eqref{eq:Nstrict} over its domain. Hence  $\hat{g}_\nu$ satisfies \eqref{eq:Nstrict} over $\N$. Also, $\hat{g}_\nu$ is centered. So it remains to check that \eqref{eq:forconvex} holds for $k\leq \nu$. Note that if $\nu\leq 3$, then $\hat{g}_\nu$ trivially satisfies \eqref{eq:Nstrict}. Suppose first that $g=1-\exp_{-\lambda}$. In this case 
$$\widetilde{g}_\nu(k)=e^{-\lambda\nu}\left(e^{\lambda k}-1\right).$$
Therefore $\widetilde{g}_\nu$ satisfies \eqref{eq:forconvex} for $k\leq \nu$ because, in light of Theorem~\ref{thm:analytic}, so does any positive multiple of $\exp_\lambda-1$. Now suppose that $g=p_\alpha$ for some $\alpha\in (0,1)$. In this case
$$\widetilde{g}_\nu(k)=\nu^\alpha\left\{1-\left(1-\frac{k}{\nu}\right)^\alpha\right\}.$$ 
The Taylor series of $(1-x)^\alpha$, for $\alpha\in (0,1)$  is (absolutely) convergent (to $(1-x)^\alpha$) for any $x$ such that $|x|<1$ and it is of the form $1-\sum_{\ell\geq 1}b_\ell x^\ell$ where $b_\ell\geq 0,\, \forall\ell\in\N$. Therefore,
$$\widetilde{g}_\nu(k)=\nu^\alpha\left(\sum_{\ell\geq 1}b_\ell \left(\frac{k}{\nu}\right)^\ell\right)=\sum_{\ell\geq 1}c_\ell k^\ell=\sum_{\ell\geq 1}c_\ell p_\ell(k)$$
and we conclude that $\widetilde{g}_\nu$ satisfies \eqref{eq:forconvex} for $k\leq \nu$ because the linear term $p_1$ satisfies \eqref{eq:forconvex} as an equality for every $k\in \N$ and, in light of Theorem~\ref{thm:powersum}, $p_\ell$ satisfies \eqref{eq:forconvex} for $k\leq \nu$ for every integer $\ell\geq 2$. It remains to prove the last part of the lemma. In light of Theorem~\ref{thm:main}, it suffices to show that $\hat{g}_\nu(3)>3(\hat{g}_\nu(2)-\hat{g}_\nu(1))$. Since in both cases $\hat{g}_\nu(3)-3(\hat{g}_\nu(2)-\hat{g}_\nu(1))$ can be expanded as a convergent series with non-negative coefficients of the form $\sum_{\ell\geq 1}b_\ell\left\{p_\ell(3)-3(p_\ell(2)-p_\ell(1))\right\}$, the inequality follows by Corollary~\ref{cor:expandpunique}.
\end{proof}

\section{Functions uniquely minimized by quasi-complete graphs}
In \cite{IsmaStef}, Ismailescu and Stefanica proved that for every $\alpha\in (0,\frac{1}{2}]$, and every $m\leq {n\choose 2}$,  $\Sigma_\alpha$ is uniquely minimized by the quasi-complete graph $QK(n,m)$ over $\G(n,m)$. One of the main arguments in their proof is that if $m\leq {n-1\choose 2}$, then every graph minimizer of $\Sigma_\alpha$ has an isolated vertex. A closer look at the proof of this optimality condition reveals that it is implied by two facts: $-p_\alpha$ satisfies \eqref{eq:Nstrict} and $p_\alpha(k)\geq 2(p_\alpha(k+1)-p_\alpha(k))$. By exploiting the very same idea, a slight abstraction of the latter inequality yields an extension of the optimality condition to a larger class of functions.         
\begin{definition}\label{def:conc}
	The class $\mathcal{G}$ consists of centered, strictly increasing functions $g$ such that $-g$ satisfies \eqref{eq:Nstrict} and $g$ satisfies 
	\begin{equation}\label{eq:forconcave}
		g(k)\geq 2k\left(g(k+1)-g(k)\right),\quad k\in\N.
	\end{equation} 
\end{definition} 
As with class $\F$, and for the same reasons, class $\Gc$ is closed under taking strict conical combinations. The following result provides examples of members of $\Gc$ and at the same time shows a crucial property of these members for proving the main theorem of this section.
\begin{theorem}\label{thm:counterpart}
If $g=1-\exp_{-\lambda}$ for some positive real number $\lambda\geq\ln{2}$ or $g=p_\alpha$ for some real number $\alpha$ such that $0<\alpha\leq\frac{1}{2}$, then $g$ belongs to $\Gc$ and $\hat{g}_\nu\in \F$, for every $\nu\geq 3$.
\end{theorem}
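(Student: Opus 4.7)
The plan is to verify the three defining conditions of $\Gc$ separately for each of the two families, and to derive $\hat g_\nu\in\F$ essentially for free from Lemma~\ref{lem:hatg}. In either case $g$ is centered, strictly increasing, and strictly concave on $\R_+$, so in particular $-g$ satisfies \eqref{eq:Nstrict}; hence the only nontrivial requirement for $g\in\Gc$ is inequality~\eqref{eq:forconcave}, namely $g(k)\geq 2k(g(k+1)-g(k))$ for all $k\in\N$.

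For $g=p_\alpha$ with $0<\alpha\leq \tfrac12$, the plan is to divide \eqref{eq:forconcave} through by $k^\alpha$, reducing it to $1\geq 2k\bigl[(1+1/k)^\alpha-1\bigr]$, and then to apply Bernoulli's inequality $(1+t)^\alpha\leq 1+\alpha t$ (valid for $0<\alpha\leq 1$ and $t>-1$) with $t=1/k$ to bound the right-hand side by $2\alpha\leq 1$. For $g=1-\exp_{-\lambda}$ with $\lambda\geq\ln 2$, after multiplying \eqref{eq:forconcave} through by $e^{\lambda k}$ it becomes $e^{\lambda k}-1\geq 2k(1-e^{-\lambda})$. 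I would first use the telescoping identity
\[
e^{\lambda k}-1=(e^\lambda-1)\sum_{j=0}^{k-1}e^{j\lambda}\geq k(e^\lambda-1),
\]
so that it is enough to check the univariate inequality $e^\lambda-1\geq 2(1-e^{-\lambda})$, equivalently $\psi(\lambda):=e^\lambda+2e^{-\lambda}-3\geq 0$. A direct computation gives $\psi(\ln 2)=2+1-3=0$ and $\psi'(\lambda)=e^\lambda-2e^{-\lambda}$, which at $\lambda=\ln 2$ equals $2-1=1$ and is increasing, hence $\psi\geq 0$ on $[\ln 2,\infty)$.

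For the second assertion, $\hat g_\nu\in\F$, no new work is needed: Lemma~\ref{lem:hatg} was proved under the strictly weaker hypotheses $\lambda>0$ (for the exponential case) and $0<\alpha<1$ (for the power case), and its statement directly places $\hat g_\nu$ in $\F$ for every $\nu\geq 3$. The present more restrictive hypotheses $\lambda\geq \ln 2$ and $\alpha\leq \tfrac12$ imply those of the lemma, so a single citation suffices.

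The main technical obstacle I anticipate is in the exponential calculation, because the threshold $\lambda=\ln 2$ is tight: equality is attained throughout at $k=1$ and $\lambda=\ln 2$. This means any looser estimate at either step — the telescoping bound $e^{\lambda k}-1\geq k(e^\lambda-1)$ or the boundary inequality $\psi(\lambda)\geq 0$ — would fail at the boundary, so the two-step reduction has to be carried out in exactly this order, with the linear-in-$k$ lower bound peeled off first and the sharp univariate inequality handled separately.
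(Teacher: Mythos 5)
Your proof is correct, and it follows the paper's overall skeleton---reduce membership in $\Gc$ to inequality \eqref{eq:forconcave} (centredness, monotonicity, and \eqref{eq:Nstrict} for $-g$ being immediate from strict concavity), and obtain $\hat g_\nu\in\F$ by citing Lemma~\ref{lem:hatg}---but your verification of \eqref{eq:forconcave} for the exponential is genuinely different, and in fact sounder than the paper's. For $p_\alpha$ your Bernoulli estimate $(1+1/k)^\alpha\le 1+\alpha/k$ is interchangeable with the paper's Mean Value Theorem bound $g(k+1)-g(k)\le\alpha k^{\alpha-1}$; both yield $2k\left(g(k+1)-g(k)\right)\le 2\alpha k^\alpha\le k^\alpha$. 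For $g=1-\exp_{-\lambda}$ the paper instead tries to prove the real-variable inequality $h(x):=1-e^{-\lambda x}-2xe^{-\lambda x}(1-e^{-\lambda})\ge 0$ on all of $\R_+$, using $1-e^{-\lambda x}\ge\lambda xe^{-\lambda x}$ to get $h(x)\ge xe^{-\lambda x}\left(2e^{-\lambda}+\lambda-2\right)$ and then asserting $2e^{-\lambda}+\lambda-2\ge 0$ for $\lambda\ge\ln 2$. That assertion is false near the threshold: at $\lambda=\ln 2$ the bracket equals $\ln 2-1<0$, and it remains negative up to $\lambda\approx 1.59$; worse, at $\lambda=\ln 2$ one has $h(1/2)=1-2^{-1/2}\cdot\tfrac{3}{2}<0$, so $h$ is genuinely negative at non-integer points and the paper's real-variable strategy cannot be repaired by a sharper estimate. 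Your argument sidesteps this by working only at integers: the factorization $e^{\lambda k}-1=(e^{\lambda}-1)\sum_{j=0}^{k-1}e^{j\lambda}\ge k(e^{\lambda}-1)$ reduces everything to the univariate inequality $e^{\lambda}+2e^{-\lambda}-3\ge 0$ on $[\ln 2,\infty)$, which you verify correctly (value $0$ and derivative $1$ at $\ln 2$, with increasing derivative); your remark that both steps are tight at $k=1$, $\lambda=\ln 2$ is precisely why the restriction to integers is essential. In short: same architecture and same citation of Lemma~\ref{lem:hatg} for the $\F$-membership, but your exponential computation replaces a flawed step in the paper's proof with a valid one.
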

\begin{proof}
The statement $\hat{g}_\nu\in \F$, for every $\nu\geq 3$ follows by specializing Lemma~\ref{lem:hatg}. As in \cite{IsmaStef}, an application of the Mean Value Theorem, shows that $p_\alpha\in \Gc$ for any real number $\alpha$ such that $0<\alpha\leq\frac{1}{2}$. To prove that $1-\exp_{-\lambda}\in \Gc$ for $\lambda\geq \ln2$, consider the function $h$ on $\R_+$ defined by the map $x\mapsto 1-e^{-\lambda x}-2x(e^{-\lambda x}-e^{-\lambda (x+1)})$. It suffices to show that $h\geq 0$ for $\lambda\geq \ln2$ since this implies $h(k)\geq 0,\,k\in \N$, namely, \eqref{eq:forconcave}. Since $1-e^{-\lambda x}\geq \lambda xe^{-\lambda x}$, it follows that $h(x)\geq xe^{-\lambda x}[2e^{-\lambda}+\lambda-2]$. Since $2e^{-\lambda}+\lambda-2\geq 0$ for $\lambda\geq \ln2$, we conclude that $h$ is non-negative on $\R_+$.  
\end{proof}  
\begin{lemma}\label{lem: opt_cond}
If $m\leq {n-1\choose 2}$ and $g\in \Gc$, then every minimizer of $\Sigma_g$ over $\G(n,m)$ has an isolated vertex. 
\end{lemma}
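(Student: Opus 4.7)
The plan is to suppose for contradiction that a minimizer $G\in\G(n,m)$ has no isolated vertex, and then to produce $G'\in\G(n,m)$ with $\Sigma_g(G')<\Sigma_g(G)$ by a single local surgery that isolates a vertex of minimum degree. Since $-g$ satisfies \eqref{eq:Nstrict}, Theorem~\ref{thm:opt_1} says $G$ is a threshold graph, and the no-isolated-vertex assumption forces $\delta:=\min_i d_i\geq 1$. By Lemma~\ref{lem:twoproperties}.\eqref{com:ii}, the set $U$ of neighbors of a fixed minimum-degree vertex $v$ has exactly $\delta$ elements, each of degree $n-1$; writing $W:=[n]\setminus(\{v\}\cup U)$, every vertex of $W$ is adjacent to all of $U$ in $G$ and therefore has degree at least $\delta$.

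The surgery will remove the $\delta$ edges incident to $v$ and restore the edge count by adding $\delta$ new edges inside $W$. First I would check feasibility: a short count (using that $U$ induces a clique and is completely joined to $W$) shows that the number of non-edges of $G$ inside $W$ equals $\binom{n-1}{2}-m+\delta$, which is at least $\delta$ precisely by the hypothesis $m\leq\binom{n-1}{2}$. Next I would account for the change in $\Sigma_g$. The deletions contribute $-g(\delta)-\delta\bigl(g(n-1)-g(n-2)\bigr)$. For the additions, the key point is that every endpoint of every new edge lies in $W$ and therefore has current degree at least $\delta$ throughout the procedure; since $-g$ satisfies \eqref{eq:Nstrict}, the forward difference $g(k+1)-g(k)$ is strictly decreasing in $k$, so each added edge raises $\Sigma_g$ by at most $2\bigl(g(\delta+1)-g(\delta)\bigr)$, for a total addition cost of at most $2\delta\bigl(g(\delta+1)-g(\delta)\bigr)$.

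Combining the two phases, the net change in $\Sigma_g$ is bounded above by
\[
-g(\delta)+2\delta\bigl(g(\delta+1)-g(\delta)\bigr)-\delta\bigl(g(n-1)-g(n-2)\bigr).
\]
The sum of the first two terms is non-positive by the defining inequality \eqref{eq:forconcave} of $\Gc$ applied at $k=\delta$, while the last term is strictly negative because $g$ is strictly increasing and $\delta\geq 1$. Hence $\Sigma_g(G')<\Sigma_g(G)$, contradicting the minimality of $G$.

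The main obstacle is the feasibility step of the surgery: one must convert the global budget $m\leq\binom{n-1}{2}$ into the local statement that enough non-edges sit inside $W$, and must verify that all of their endpoints have large enough degree so that the integer concavity of $g$ can absorb the cost of the additions via \eqref{eq:forconcave}. Once these two facts are in hand the argument is essentially a direct abstraction, to the whole class $\Gc$, of the move used in \cite{IsmaStef} for $g=p_\alpha$.
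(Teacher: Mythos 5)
Your proposal is correct and follows essentially the same route as the paper's proof: reduce to threshold graphs via Theorem~\ref{thm:opt_1}, use Lemma~\ref{lem:twoproperties} to see that the neighbors of a minimum-degree vertex all have degree $n-1$, verify via the budget $m\leq\binom{n-1}{2}$ that the $\delta$ deleted edges can be reinserted among the remaining vertices (all of degree at least $\delta$), and bound the net change by $-g(\delta)+2\delta\bigl(g(\delta+1)-g(\delta)\bigr)-\delta\bigl(g(n-1)-g(n-2)\bigr)<0$ using \eqref{eq:forconcave} and strict monotonicity. The only cosmetic difference is that you bound the reinsertion cost per edge while the paper bounds it per vertex via the increments $\epsilon_i$ with $\sum_i\epsilon_i=2\delta$; the resulting estimate is identical.
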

\begin{proof} By Theorem~\ref{thm:opt_1} every minimizer of $\Sigma_g$ is a threshold graph. Let $G$ be a threshold graph with degree sequence $d$. Assume without loss of generality that $d_1\geq d_2\ldots\geq d_n$. It suffices to show that if $G$ has positive minimum degree $d_n$, then $G$ is not a minimizer of $\Sigma_g$. For simplicity let $d_n=k$. By Lemma~\ref{lem:twoproperties}.\eqref{com:i}, every neighbor of vertex $n$ has degree $n-1$. Let $G'$ be the subgraph of $G$ induced by $\{k+1,\ldots,n-1\}$ and let $m'$ be the number of its edges. We claim that $m'\leq {n-k-1\choose 2}-k$. For if not $m'> {n-k-1\choose 2}-k$ and 
	$$m=m'+\left\{k(n-1)-{k\choose 2}\right\}>{n-k-1\choose 2}+k(n-1)-{k+1\choose 2}\geq {n-1\choose 2}$$ 
which contradicts $m\leq {n-1\choose 2}$. Let $\hat{G}$ arise from $G$ by removing all the edges of $G$ incident in $n$ and adding an arbitrary set of $k$ edges to the graph induced by $\{k+1,\ldots,n-1\}$. This is possible because $m'+k\leq {n-k-1\choose 2}$. Let $\hat{d}$ be the degree sequences of $\hat{G}$. Thus 
\[
\hat{d}_i=\begin{cases}
n-2 & i=1,\ldots,k\\
d_i+\epsilon_i & i=k+1,\ldots, n-1\\
0 & i=n, 
\end{cases}
\]
where, for $i=k+1,\ldots,n-1$, $\epsilon_i$ is the change in the degree of vertex $i$ incurred by adding the $k$ edges to the subgraph induced by $\{k+1,\ldots,n-1\}$. Therefore $0\leq \epsilon_i\leq k$ and $\sum_{i=k+1}^{n-1}\epsilon_i=2k$.
We simply check that $g\in \Gc$ implies  $\Sigma_g(d)-\Sigma_g(\hat{d})>0$ so that $G$ is not a minimizer of $\Sigma_g$. One has 
$$\Sigma_g(d)-\Sigma_g(\hat{d})=k\left[g(n-1)-g(n-2)\right]+\left[g(k)-g(0)\right]-\sum_{i=k+1}^{n-1}\left[g(d_i+\epsilon_i)-g(d_i)\right].$$
Since $-g$ satisfies \eqref{eq:Nstrict}, by Lemma~\ref{lem:slope}, it follows that  
$$g(d_i+\epsilon_i)-g(d_i)\leq \epsilon_i\left[g(d_i+1)-g(d_i)\right]\leq \epsilon_i\left[g(k+1)-g(k)\right]$$ 
because $k\leq d_i$ for $i=k+1,\ldots,n-1$. Hence 
$$\sum_{i=k+1}^{n-1}\left[g(d_i+\epsilon_i)-g(d_i)\right]\leq 2k\left[g(k+1)-g(k)\right].$$
Therefore, after recalling that $g(0)=0$, because $g$ is centered, one has 
$$\Sigma_g(d)-\Sigma_g(\hat{d})\geq k\left[g(n-1)-g(n-2)\right]+g(k)-2k\left[g(k+1)-g(k)\right].$$
Since for every $k\in \N$ it holds that $k\left[g(n-1)-g(n-2)\right]>0$ (because $g$ is strictly increasing) and $g(k)-2k\left[g(k+1)-g(k)\right]$ (because $g$ satisfies \eqref{eq:forconcave}) we conclude that $\Sigma_g(d)-\Sigma_g(\hat{d})>0$.
\end{proof}

\begin{theorem}\label{thm:mainconcave}
	If $g=1-\exp_{-\lambda}$ for some real number $\lambda\geq\ln{2}$ or $g=p_\alpha$ for some real number $\alpha$ such that $0<\alpha\leq\frac{1}{2}$, then $QK(n,m)$ uniquely minimizes $\Sigma_g$ over $\G(n,m)$ for any two integers $m$ and $n$ such that $n>0$ and $0\leq m\leq {n\choose 2}$.   
\end{theorem}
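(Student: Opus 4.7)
The plan is a strong induction on $n$, combining the isolated-vertex reduction of Lemma~\ref{lem: opt_cond} in the sparse regime with a complementation trick in the dense regime. The base cases $n=1$, $m=0$, and $m=\binom{n}{2}$ are immediate, since in each the unique graph in $\G(n,m)$ is already $QK(n,m)$. For the inductive step I fix $n\ge 2$ and $0<m<\binom{n}{2}$ and split according to whether $m\le\binom{n-1}{2}$ or not.

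\emph{Case 1: $m\le\binom{n-1}{2}$.} By Theorem~\ref{thm:counterpart}, $g\in\Gc$, and by Theorem~\ref{thm:opt_1} every minimizer of $\Sigma_g$ over $\G(n,m)$ is a threshold graph. Lemma~\ref{lem: opt_cond} then forces any such minimizer $G$ to have an isolated vertex $v$. Since $g(0)=0$, one has $\Sigma_g(G-v)=\Sigma_g(G)$, so $G-v$ is itself a minimizer of $\Sigma_g$ over $\G(n-1,m)$. By the induction hypothesis, $G-v\cong QK(n-1,m)$, hence $G\cong QK(n-1,m)\oplus K_1\cong QK(n,m)$.

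\emph{Case 2: $m>\binom{n-1}{2}$.} Set $m':=\binom{n}{2}-m$, so that $m'\le n-2$. Complementation is a bijection $\G(n,m)\to\G(n,m')$ sending the degree sequence $d=(d_1,\dots,d_n)$ to $\bar d=(n-1-d_1,\dots,n-1-d_n)$. Since each $d_i\in\{0,\dots,n-1\}$, the identity $g(d_i)=g(n-1)-\hat g_{n-1}(n-1-d_i)$ from Lemma~\ref{lem:hatg} yields
\[
\Sigma_g(d)\;=\;n\,g(n-1)\;-\;\Sigma_{\hat g_{n-1}}(\bar d),
\]
so $G$ minimizes $\Sigma_g$ over $\G(n,m)$ if and only if $\bar G$ maximizes $\Sigma_{\hat g_{n-1}}$ over $\G(n,m')$. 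By Theorem~\ref{thm:counterpart}, $\hat g_{n-1}\in\F$, and since $m'\le n-1$, Theorem~\ref{thm:main} applies. The potentially troublesome exception $m'=3$ is excluded by the strict inequality $\hat g_{n-1}(3)>3(\hat g_{n-1}(2)-\hat g_{n-1}(1))$, which was established inside the proof of Lemma~\ref{lem:hatg}: the left-hand side expands as a convergent series with non-negative coefficients in the quantities $p_\ell(3)-3(p_\ell(2)-p_\ell(1))$, strictly positive for $\ell\ge 3$ by Corollary~\ref{cor:expandpunique}. Hence the unique maximizer of $\Sigma_{\hat g_{n-1}}$ over $\G(n,m')$ is $QS(n,m')$, forcing $\bar G\cong QS(n,m')$ and therefore $G\cong\overline{QS(n,m')}=QK(n,m)$.

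The only delicate point, and the reason the earlier sections were carefully staged, is the translation in the dense case: the convex extensions $\hat g_\nu$ were engineered in Lemma~\ref{lem:hatg} precisely so that, on admissible complements, $\Sigma_{\hat g_{n-1}}$ differs from $\Sigma_g$ only by the additive constant $n\,g(n-1)$. Given that ingredient, together with $\hat g_{n-1}\in\F$ and the strict inequality at $k=3$, no further calculation is required; the remainder is purely a matter of aligning the two cases so that every admissible pair $(n,m)$ is covered by either the isolated-vertex reduction or the complement argument.
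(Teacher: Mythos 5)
Your proof is correct and follows essentially the same route as the paper's: Lemma~\ref{lem: opt_cond} to strip isolated vertices in the sparse regime, and the complementation identity $\Sigma_g(d)=\ell\,g(\nu)-\Sigma_{\hat g_\nu}(\bar d)$ together with the last part of Lemma~\ref{lem:hatg} in the dense regime. The only difference is cosmetic: you peel off one isolated vertex at a time by induction on $n$, whereas the paper reduces in a single step to $\G(\ell,m)$ with $\binom{\ell-1}{2}<m\le\binom{\ell}{2}$.
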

\begin{proof}
Let $\ell\leq n$ be the unique positive integer such that 
\begin{equation}\label{eq:int_last}
{\ell-1\choose 2}< m\leq {\ell \choose 2}.
\end{equation}
Since $g\in \Gc$, every graph minimizer of $\Sigma_g$ has at least $n-\ell$ isolated vertices: if $\ell=n$, then this fact is trivial otherwise, if $\ell<n$, then it is granted by Lemma~\ref{lem: opt_cond}. Since isolated vertices do not contribute to $\Sigma_g$, the minimum of $\Sigma_g$ over $\G(n,m)$ is the same as the minimum of $\Sigma_g$ over $\G(\ell,m)$ and any minimizer in the latter set can be uniquely extended to a minimizer of the former by adding isolated vertices. Note that, with some abuse of notation, we use the same symbol $\Sigma_g$ for both the original objective function and for the objective function of the minimization over $\G(\ell,m)$.
Let $\mathbb{D}(\ell,m)$ be the set of the degree sequences of the graphs of $\G(\ell,m)$ and let $\overline{m}={\ell \choose 2}-m$. The map $G\mapsto \overline{G}$ is a bijection between $\G(\ell,m)$ and $\G(\ell,\overline{m})$ and the map $d\mapsto \overline{d}$, where $\overline{d}=(\ell-1-d_1\ldots,\ell-1-d_\ell)$ is the degree sequence of $\overline{G}$, is a bijection  between $\mathbb{D}(\ell,m)$ and $\mathbb{D}(\ell,\overline{m})$. Let $\nu=\ell-1$ and consider the function $\hat{g}_\nu$ defined in Lemma~\ref{lem:hatg}. Since $\hat{g}_\nu(\nu)=g(\nu)$, for every $k\in \{0,\ldots,\nu\}$ one has the identity
$$g(\nu)=g(k)+\hat{g}_\nu(\nu-k)=\hat{g}_\nu(\nu)$$
and thus, for any $d\in \mathbb{D}(\ell,m)$,
$$\sum_ig(d_i)=\ell g(\nu)-\sum_i\hat{g}_\nu(\nu-d_i)$$ 
which implies 
$$\min_{\mathbb{D}(\ell,m)}\Sigma_g=\ell g(\nu)-\max_{\mathbb{D}(\ell,\overline{m})}\Sigma_{\hat{g}_\nu}.$$ 
Therefore, any minimizer of $\Sigma_g$ over $\G(\ell,m)$ is the complement of a maximizer of $\Sigma_{\hat{g}_\nu}$ over $\G(\ell,\overline{m})$. Since $\overline{m}\leq \ell-1$ (because of \eqref{eq:int_last}) and $\hat{g}_\nu\in \F$ (because of Theorem~\ref{thm:counterpart}), it follows that $\Sigma_{\hat{g}_\nu}$ is uniquely maximized by $QS(\ell,\overline{m})$ (because of the last part of Lemma~\ref{lem:hatg}). We conclude that $\Sigma_g$ is uniquely minimized by $QK(\ell,m)$ and hence that $\Sigma_g$ is uniquely minimized by $QK(n,m)$.    
\end{proof}
Theorem \ref{thm:mainconcave} can be stated more generally. In fact, the class of functions qualifying for the thesis are precisely those functions $g$ such that $g\in \Gc$ and $\hat{g}_\nu\in \F$ under the condition that $\hat{g}_\nu(3)>3(\hat{g}_\nu(2)-\hat{g}_\nu(1))$, which grants uniqueness in maximizing the complement.
\begin{theorem}\label{thm:mainconcavegen}
	Let $g\in \Gc$. If $\hat{g}_\nu\in \F$ for some $\nu\geq 3$ and $\hat{g}_\nu(3)>3(\hat{g}_\nu(2)-\hat{g}_\nu(1))$, then $QK(n,m)$ uniquely minimizes $\Sigma_g$ over $\G(n,m)$ for any two integers $m$ and $n$ such that $0<n\leq \nu-1$ and $0\leq m\leq {n\choose 2}$.   
\end{theorem}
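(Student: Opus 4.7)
My plan is to follow the proof of Theorem~\ref{thm:mainconcave} essentially verbatim, with the single change that the membership $\hat{g}_{\ell-1}\in\F$ used there (extracted from Lemma~\ref{lem:hatg} for specific families of $g$) is now supplied directly by the hypothesis $\hat{g}_\nu\in\F$.

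First, I would let $\ell$ be the unique positive integer with ${\ell-1 \choose 2}<m\leq{\ell \choose 2}$. Since $g\in\Gc$, iterated applications of Lemma~\ref{lem: opt_cond} show that every minimizer of $\Sigma_g$ over $\G(n,m)$ has at least $n-\ell$ isolated vertices, so $\min_{\G(n,m)}\Sigma_g=\min_{\G(\ell,m)}\Sigma_g$ and uniqueness transfers between the two sets. The constraint $n\leq\nu-1$ forces $\ell-1\leq\nu-2$, so that every degree $d_i$ appearing in $\mathbb{D}(\ell,m)$ lies inside $\{0,\ldots,\nu\}$, the range in which $\hat{g}_\nu$ is pinned down by the identity $g(k)=g(\nu)-\hat{g}_\nu(\nu-k)$.

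Second, I would use complementation in $K_\ell$. Writing $\overline{d}_i=\ell-1-d_i$ and $\Delta=\nu+1-\ell$, the identity above yields
$$\Sigma_g(d)=\ell\, g(\nu)-\sum_{i=1}^{\ell}\hat{g}_\nu(\overline{d}_i+\Delta)=\ell\, g(\nu)-\Sigma_\psi(\overline{d}),$$
where $\psi(x):=\hat{g}_\nu(x+\Delta)$. Since $\overline{m}={\ell \choose 2}-m\leq\ell-2$, the bijection $d\leftrightarrow\overline{d}$ translates the minimization of $\Sigma_g$ over $\G(\ell,m)$ into the maximization of $\Sigma_\psi$ over $\G(\ell,\overline{m})$. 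I would then invoke Theorem~\ref{thm:main} applied to $\psi$: the hypothesis $\hat{g}_\nu\in\F$ supplies the required conditions at the integer arguments involved (all in $\{0,\ldots,\ell-1\}\subseteq\{0,\ldots,\nu\}$), and the strict inequality $\hat{g}_\nu(3)>3(\hat{g}_\nu(2)-\hat{g}_\nu(1))$ rules out the alternative maximizer $K_3\oplus\overline{K}_{\ell-3}$ in the exceptional case $\overline{m}=3$ of Theorem~\ref{thm:main}.\eqref{com:ii_inth}. The unique maximizer is $QS(\ell,\overline{m})$; complementing in $K_\ell$ gives $QK(\ell,m)$ as the unique minimizer over $\G(\ell,m)$, and re-inserting $n-\ell$ isolated vertices identifies $QK(n,m)$ as the unique minimizer over $\G(n,m)$.

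The main obstacle I expect is the careful bookkeeping behind ``applying Theorem~\ref{thm:main} to $\psi$'': since $\psi$ is a horizontal shift of $\hat{g}_\nu$, it is not centered, so one first recenters to $\psi^0:=\psi-\psi(0)$ (which has the same maximizers as $\psi$, since $\Sigma_\psi$ and $\Sigma_{\psi^0}$ differ by an additive constant) and then verifies that \eqref{eq:Nstrict} and \eqref{eq:forconvex} inherited by $\psi^0$ at the arguments actually used in the proof of Theorem~\ref{thm:main} (namely $\{0,1,\ldots,\overline{m}\}$) follow from the corresponding inequalities for $\hat{g}_\nu$ at arguments in $\{0,\ldots,\nu\}$. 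The constraint $n\leq\nu-1$ provides exactly the slack required for this translation.
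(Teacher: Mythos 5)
Your overall strategy---reduce to $\G(\ell,m)$ via Lemma~\ref{lem: opt_cond}, complement inside $K_\ell$, and invoke Theorem~\ref{thm:main}---is exactly the route the paper takes for Theorem~\ref{thm:mainconcave}, and the reduction and complementation steps are fine. The genuine gap is at the point you yourself flag as ``bookkeeping'': the claim that the recentered shift $\psi^0(k)=\hat{g}_\nu(k+\Delta)-\hat{g}_\nu(\Delta)$ inherits \eqref{eq:forconvex} (and the strict inequality at $3$) from $\hat{g}_\nu$. Condition \eqref{eq:Nstrict} is shift-invariant, but \eqref{eq:forconvex} is not, and the implication fails for general members of $\F$. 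Writing $\nabla_j=f(j)-f(j-1)$, condition \eqref{eq:forconvex} reads $(k-2)(\nabla_1+\nabla_k)\geq 2\sum_{j=2}^{k-1}\nabla_j$; the centered function with increments $\nabla=(0,1,3,4,6,8,10,\dots)$ (i.e.\ $\nabla_j=2j-4$ for $j\geq 4$) satisfies \eqref{eq:Nstrict} and \eqref{eq:forconvex} for every $k\geq 3$, yet its shift by $\Delta=1$ violates \eqref{eq:forconvex} already at $k=3$, since $\nabla_2+\nabla_4=5<6=2\nabla_3$. So ``$\hat{g}_\nu\in\F$'' alone does not supply what you need for $\psi^0$, and your constraint $n\leq \nu-1$ forces $\Delta=\nu-\ell+1\geq 2$, so the shift cannot be avoided.

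What is actually needed is visible from your own identity: on $\{0,\dots,\ell-1\}$ one has $\psi^0(k)=g(\ell-1)-g(\ell-1-k)=\widetilde{g}_{\ell-1}(k)$, so the function to which Theorem~\ref{thm:main} must be applied is $\hat{g}_{\ell-1}$, and $\ell-1\leq n-1\leq \nu-2<\nu$. Membership $\hat{g}_{\ell-1}\in\F$, together with $\hat{g}_{\ell-1}(3)>3(\hat{g}_{\ell-1}(2)-\hat{g}_{\ell-1}(1))$, is a hypothesis at a \emph{different, smaller} index and is not a formal consequence of the corresponding hypothesis at the single given $\nu$. This is precisely why the proof of Theorem~\ref{thm:mainconcave} sets $\nu=\ell-1$ and leans on Lemma~\ref{lem:hatg}, which delivers $\hat{g}_{\nu'}\in\F$ for \emph{every} $\nu'\geq 3$ for the specific families of $g$ (the same holds for the $x\mapsto x/(x+1)$ example). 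To close the gap you must either (a) assume $\hat{g}_{\nu'}\in\F$ and the strict inequality at $3$ for all $3\leq\nu'\leq\nu-2$ (which is what every application in the paper actually verifies), or (b) genuinely prove, using the special form $\hat{g}_\nu(k)=g(\nu)-g(\nu-k)$ together with $g\in\Gc$, that $\widetilde{g}_{\ell-1}$ satisfies \eqref{eq:forconvex} on the range used in Theorem~\ref{thm:main}; the latter is not routine and, as the example above shows, cannot be deduced from $\hat{g}_\nu\in\F$ alone.
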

Using this theorem, it is possible to construct other functions $g$ in such a way that $\Sigma_g$ is uniquely minimized by a quasi-complete graph. For example, if $g:\R_+\rightarrow \R$ is defined by $x\mapsto \frac{x}{x+1}$, then it is easy to see that $g\in \Gc$. Furthermore,  
$$\hat{g}_\nu(k)=\frac{1}{\nu+1}\left\{\left(1-\frac{k}{\nu+1}\right)^{-1}-1\right\}=\frac{1}{\nu+1}\sum_{\ell\geq 1}\left(\frac{k}{\nu+1}\right)^{\ell}=\sum_{\ell\geq 1}a_\ell p_\ell(k),$$
where we set $a_\ell=(\nu+1)^{-(\ell+1)}$. Since $a_\ell\geq 0$, $p_\ell\in \F$ for every $\ell\geq 1$ (for $\ell=1$ this is trivial while for $\ell\geq 2$ it follows by Theorem~\ref{thm:powersum}), and $p_\ell(3)>3(p_\ell(2)-p_\ell(1))$ for every $\ell\geq 3$ (Lemma~\ref{lem:mine_inequality2}), we conclude that $\hat{g}_\nu\in \F$ and that $\hat{g}_\nu(3)>3(\hat{g}_\nu(2)-\hat{g}_\nu(1))$. Therefore, $\Sigma_g$ is uniquely minimized by quasi-complete graphs by Theorem~\ref{thm:mainconcavegen}.

\section*{Appendix: Proofs of Theorem~\ref{thm:powersum} and Theorem~\ref{ex:1}}
Here we give proofs of Theorems~\ref{thm:powersum} and~\ref{ex:1}. 
\paragraph{Proof of Theorem~\ref{thm:powersum}}
First, we need the following lemma.
\begin{lemma}\label{lem:mine_inequality}
Let $x$ and $\beta$ be real numbers. If $x\geq 3$ and $\beta\geq 2$, then
\begin{equation}\label{eq:int_a1}
	\left(1-\frac{1}{x}\right)^{\beta-1}+\left(\frac{\beta-1}{\beta}\right)\frac{2}{x}\leq 1.
\end{equation} 
\end{lemma}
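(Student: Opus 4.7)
The plan is to set $t = 1/x \in (0, 1/3]$ and $\gamma = \beta - 1 \geq 1$, so that \eqref{eq:int_a1} becomes
\[
q(\gamma) := 1 - (1-t)^\gamma - \frac{2\gamma t}{\gamma+1} \geq 0.
\]
A direct calculation gives $q(1) = 0$ and $\lim_{\gamma \to \infty} q(\gamma) = 1 - 2t$, which is $\geq 1/3 > 0$ since $t \leq 1/3$. So the lemma will follow as soon as I show that $q$ is unimodal on $[1, \infty)$ (first strictly increasing, then strictly decreasing): the minimum on $[1, \infty)$ is then forced to be $\min(q(1), \lim_{\gamma \to \infty} q(\gamma)) = 0$.

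To establish the unimodality, differentiate to get $q'(\gamma) = A(1-t)^\gamma - \frac{2t}{(\gamma+1)^2}$, where $A := \ln(1/(1-t)) > 0$, so the sign of $q'(\gamma)$ coincides with that of $F(\gamma) - 2t/A$, where $F(\gamma) := (\gamma+1)^2 (1-t)^\gamma$. A routine computation yields $F'(\gamma) = (\gamma+1)(1-t)^\gamma [2 - A(\gamma+1)]$, so $F$ has a unique interior maximum at $\gamma_0 := 2/A - 1$; since $A \leq \ln(3/2) < 1$ for $t \leq 1/3$, we have $\gamma_0 \geq 1$. To place the full picture on $[1, \infty)$ the one nontrivial step is to show $F(1) > 2t/A$, i.e.\ $2(1-t)\ln(1/(1-t)) > t$. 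Setting $u(t) := -2(1-t)\ln(1-t) - t$, one checks $u(0) = 0$ and $u'(t) = 2\ln(1-t) + 1 \geq 0$ exactly when $t \leq 1 - e^{-1/2}$; since $1/3 < 1 - e^{-1/2}$, $u$ is nondecreasing on $[0, 1/3]$, hence $u(t) > 0$ on $(0, 1/3]$, which is the claimed inequality.

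With $F(1) > 2t/A$ in hand, $F$ is strictly increasing on $[1, \gamma_0]$ and strictly decreasing on $[\gamma_0, \infty)$ with $\lim_{\gamma \to \infty} F(\gamma) = 0 < 2t/A$; so $F(\gamma) = 2t/A$ at a unique point $\gamma^* > \gamma_0$, and consequently $q' > 0$ on $[1, \gamma^*)$ and $q' < 0$ on $(\gamma^*, \infty)$. This is exactly the unimodality of $q$ on $[1, \infty)$ and finishes the proof. The only real obstacle is the auxiliary calculus estimate $2(1-t)\ln(1/(1-t)) > t$ on $(0, 1/3]$ used to push $q'(1)$ strictly positive; everything else reduces to elementary monotonicity analysis of $F$ and $q$.
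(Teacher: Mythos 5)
Your proof is correct, and it takes a genuinely different route from the paper's. The paper proves the lemma by splitting into three ranges of $\beta$ (namely $\beta\geq 4$, $3\leq\beta<4$, $2\leq\beta<3$): in the first two it crudely bounds $(1-1/x)^{\beta-1}$ by a cube or a square and $\tfrac{\beta-1}{\beta}$ by a constant, then expands; in the third it invokes a quadratic upper bound for $(1-y)^a$ cited from Mitrinovi\'{c}'s monograph. You instead fix $t=1/x\in(0,1/3]$ and treat $\gamma=\beta-1$ as the variable, showing $q(\gamma)=1-(1-t)^\gamma-\tfrac{2\gamma t}{\gamma+1}$ is unimodal on $[1,\infty)$ so that its infimum sits at the endpoints, where it equals $q(1)=0$ and $1-2t\geq 1/3$. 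I checked the computations: $q'(\gamma)=A(1-t)^\gamma-2t/(\gamma+1)^2$ with $A=-\ln(1-t)$, the reduction of its sign to $F(\gamma)=(\gamma+1)^2(1-t)^\gamma$ versus $2t/A$, the formula $F'(\gamma)=(\gamma+1)(1-t)^\gamma\left[2-A(\gamma+1)\right]$, the bound $\gamma_0=2/A-1\geq 1$ from $A\leq\ln(3/2)$, and the key estimate $F(1)>2t/A$, which your auxiliary function $u(t)=-2(1-t)\ln(1-t)-t$ handles correctly since $u'(t)=2\ln(1-t)+1>0$ on $[0,1/3]$ (as $1/3<1-e^{-1/2}$). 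All are right, and the endpoint logic is sound because $q$ stays above $1-2t>0$ past its maximum. What your approach buys is uniformity and self-containment: one argument for all $\beta\geq 2$ with no case split and no appeal to an external inequality; what the paper's approach buys is that each case is a one-line polynomial verification once the reduction is made. One cosmetic remark: you say $u$ is ``nondecreasing'' but then conclude strict positivity; since $u'>0$ strictly on $[0,1/3]$ you may as well say strictly increasing, though only $q\geq 0$ is needed for the lemma anyway.
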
 
\begin{proof}
Let $D=\{x\in \R |\ x\geq 3 \}$. To prove that \eqref{eq:int_a1} holds true for every $x\in D$ and every $\beta\geq 2$, we distinguish three cases: $\beta\geq 4$, called \textbf{Case 1}, $3\leq \beta< 4$, called \textbf{Case 2} and, finally, $2\leq \beta< 3$, called \textbf{Case 3}.
\mybreak
\textbf{Case 1}. Since $\beta-1\geq 3$,
$$\left(1-\frac{1}{x}\right)^{\beta-1}+\left(\frac{\beta-1}{\beta}\right)\frac{2}{x}\leq \left(1-\frac{1}{x}\right)^3+\frac{2}{x}\leq 1,\quad \forall x\in D$$
as it is readily checked by expanding the cube. Hence \eqref{eq:int_a1} holds in \textbf{Case 1}.
\mybreak
\textbf{Case 2}. Since $3\leq\beta< 4$ implies $\beta-1\geq 2$ and $\frac{\beta-1}{\beta}\leq \frac{3}{4}$, one has
$$\left(1-\frac{1}{x}\right)^{\beta-1}+\left(\frac{\beta-1}{\beta}\right)\frac{2}{x}\leq \left(1-\frac{1}{x}\right)^2+\frac{3}{2x}\leq 1,\quad \forall x\in D$$
as it is readily checked by expanding the square. Hence \eqref{eq:int_a1} also holds in \textbf{Case 2}.
\mybreak
\textbf{Case 3}. We use the following inequality (\cite{mitri}, Theorem 5 p.~35) where $a$ and $y$ are positive real numbers such that $a<3$ and $y<1$ 
$$(1-y)^a<1-ay+\frac{1}{2}a(a-1)y^2.$$
The above inequality yields
$$\left(1-\frac{1}{x}\right)^{\beta-1}\leq 1-(\beta-1)\frac{1}{x}+\frac{(\beta-1)(\beta-2)}{2x^2}.$$ 
Hence
$$\left(1-\frac{1}{x}\right)^{\beta-1}+\left(\frac{\beta-1}{\beta}\right)\frac{2}{x}\leq 1-(\beta-1)\frac{1}{x}+\frac{(\beta-1)(\beta-2)}{2x^2}+\left(\frac{\beta-1}{\beta}\right)\frac{2}{x}.$$ 
Since 
$$\frac{2}{x}\left(\frac{\beta-1}{\beta}\right)+\frac{(\beta-1)(\beta-2)}{2x^2}\leq (\beta-1)\frac{1}{x},\quad \forall x\in D,$$
it follows that \eqref{eq:int_a1} holds in \textbf{Case 3} as well and this completes the proof. 
\end{proof}
Now we can prove the theorem. Let $D$ be as in the lemma. Since $p_\beta$ is a centered and strictly convex function on $\R_+$, we only need to check that $p_\beta$ satisfies \eqref{eq:forconvex}. So we only need to check that for every integer $k$ such that $k\geq 3$ it holds that  
	$$(k-2)k^\beta-k\left((k-1)^\beta-1\right)\geq 0$$
or, equivalently, that
$$k^\beta-(k-1)^\beta-2k^{\beta-1}+1\geq 0.$$
The function $\psi:\R_+\rightarrow \R$ defined by
$$\psi(x)=x^\beta-(x-1)^\beta-2x^{\beta-1}+1$$ 
is non-decreasing in $x$ for all $x\in D$. To see this, take the derivative $\psi'$ of $\psi$ and impose that $\psi'\geq 0$, i.e., 
$$\beta x^{\beta-1}-\beta(x-1)^{\beta-1}-2(\beta-1)x^{\beta-2}\geq 0.$$
Since $x^{\beta-1}>0$ for $x>0$, on dividing both sides of the above inequality by $x^{\beta-1}$ yields inequality \eqref{eq:int_a1} which, by Lemma~\ref{lem:mine_inequality}, holds true for every $x\in D$ and every $\beta\geq 2$. Therefore, $\psi$ is increasing in $x$ over $D$ and  
$$\psi(x)\geq \psi(3),\quad \forall x\geq 3.$$
Since by Lemma~\ref{lem:mine_inequality2}
$$\psi(3)=3^\beta-2^\beta+1-2\cdot 3^{\beta-1}=3^{\beta-1}-2^\beta+1\geq 0$$  
we conclude that $\psi(x)\geq 0$ for all real numbers $x\geq 3$ and hence for all integers $k$ such that $k\geq 3$. In particular, \eqref{eq:forconvex} is satisfied by $p_\beta$ for every $\beta\geq 2$ which is precisely what we wanted to prove.
\paragraph{Proof of Theorem~\ref{ex:1}} Observe that $QS(m(q)+1,m(q))\cong K_{1,m(q)}$. Since the value of $\Sigma_{1+\epsilon}$ in the degree sequence of $K_{1,m(q)}$ is $m(q)+\left(m(q)\right)^{1+\epsilon}$ while the value of $\Sigma_{1+\epsilon}$ in the degree sequence of $K_q\oplus\overline{K}_{m(q)+1-q}$ is $q\left(q-1\right)^{1+\epsilon}$, we must show that there exists some $\epsilon$ which satisfies 
$$
m(q)+\left(m(q)\right)^{1+\epsilon}<q\left(q-1\right)^{1+\epsilon}
$$  
which, after simplifying,  reduces to the inequality
\begin{equation}\label{eq:int_exa}
	\left(q-1\right)^{\epsilon}\left[2-\left(\frac{q}{2}\right)^\epsilon\right]>1.
\end{equation}
Let $h: [0,1]\rightarrow \R$ be the function defined by 
$$h(\epsilon)=\left(q-1\right)^{\epsilon}\left[2-\left(\frac{q}{2}\right)^\epsilon\right]-1.$$
Clearly $h(0)=0$ and $h(1)<0$ for any fixed $q\geq 4$. 
The first and the second derivatives of $h$ are, respectively, 
$$h'(\epsilon)=2\ln(q-1)(q-1)^\epsilon-\left(\ln(\frac{q}{2})+\ln(q-1)\right)(q-1)^\epsilon\left(\frac{q}{2}\right)^\epsilon$$
and
$$h''(\epsilon)=2\left[\ln(q-1)\right]^2(q-1)^\epsilon-\left(\ln(\frac{q}{2})+\ln(q-1)\right)^2(q-1)^\epsilon\left(\frac{q}{2}\right)^\epsilon.$$
One has $h''(\epsilon)<0$ if and only if 
$$2<\left(\frac{q}{2}\right)^\epsilon\left\{1+\frac{\ln{\frac{q}{2}}}{\ln{q-1}}\right\}^2$$
which is satisfied by all $\epsilon\in (0,1]$ because 
$$\left(\frac{q}{2}\right)^\epsilon>1,\quad \quad \frac{\ln{2}}{\ln{3}}\leq \frac{\ln{q/2}}{\ln{q-1}}< 1,\quad \text{\rm and}\quad 2\left(\frac{\ln{3}}{\ln{6}}\right)^2<1\quad \forall q\geq 4.$$
Hence $h$ s strictly concave on $(0,1]$ and, consequently, $h$ has at most two roots, one which is zero, and the other, if any, is denoted by $\xi_0$. The derivative of $h$ vanishes if and only if 
$$
2=\left\{1+\frac{\ln{\frac{q}{2}}}{\ln{q-1}}\right\}\left(\frac{q}{2}\right)^\epsilon.
$$
Since  
$$\frac{\ln{6}}{\ln{3}}\left(\frac{q}{2}\right)^\epsilon\leq \left\{1+\frac{\ln{\frac{q}{2}}}{\ln{q-1}}\right\}\left(\frac{q}{2}\right)^\epsilon< 2\left(\frac{q}{2}\right)^\epsilon,\quad \forall q\geq 4$$
the (unique) root $\xi$ of $h'$ satisfies the inequalities   
$$1<\left(\frac{q}{2}\right)^\xi\leq \frac{\ln{9}}{\ln{6}}$$
and, consequently,
\begin{equation}\label{eq:int_exa_1}
	0<\xi<\ln\left(\frac{\ln{9}}{\ln{6}}\right)/\ln\left(q/2\right).
\end{equation}
Thus $h$ has an absolute maximum point $\xi$ which is located in the interval defined in \eqref{eq:int_exa_1}. Since $\xi$ is an absolute maximum point, $0=h(0)<h(\xi)$ holds. So $h$ has two roots and $h$ is positive between $0$ and $\xi_0$. We conclude that inequality \eqref{eq:int_exa} is satisfied by any $\epsilon$ such that $0<\epsilon<\xi_0$ and hence, for any such $\epsilon$, $K_{1,m(q)}$ is not a maximizer of $\Sigma_{1+\epsilon}$ over $\G(m(q)+1,m(q))$.

\end{document}